\documentclass[11pt,leqno]{amsart}
\topmargin -1.2cm \evensidemargin 0cm \oddsidemargin 0cm
\textwidth 16cm \textheight23cm
\usepackage{amscd}
\usepackage{pb-diagram}
\usepackage{amssymb}
\usepackage{amsfonts}
\usepackage{latexsym}
\usepackage{verbatim}
\usepackage{amsthm}
\usepackage{amscd}
\usepackage{color}
\numberwithin{equation}{section}
\newcommand{\N}{\nabla}

\theoremstyle{plain}
\newtheorem{theorem}{Theorem}[section]

\newtheorem{lemma}[theorem]{Lemma}
\newtheorem{prop}[theorem]{Proposition}
\newtheorem{cor}[theorem]{Corollary}
\newtheorem{rem}[theorem]{Remark}
\newtheorem{ex}[theorem]{Example}
\newcommand{\ov}[1]{\overline{#1}}

\newcommand{\w}[1]{\wedge}

\newcommand{\e}{\begin{equation}}
\newcommand{\ee}{\end{equation}}

\newcommand\R{{\mathbb R}}

\renewcommand\d{{\partial}}

\newcommand\dd{{\partial\bar{\partial}}}
\newcommand\f{{\varphi}}
\newcommand\g{{\frak{g}}}

\renewcommand\w{{\wedge}}

\sloppy

\subjclass{53C44, 53C10, 53C25, 53C55}
\thanks{This work was supported by by G.N.S.A.G.A. of I.N.d.A.M}
\address{Dipartimento di Ingegneria e Scienze dell'Informazione e Matematica \\ Universit\`a dell'Aquila\\
via Vetoio\\ 67100 L'Aquila\\ Italy}
\email{lucio.bedulli@univaq.it}
\address{Dipartimento di Matematica G. Peano \\ Universit\`a di Torino\\
Via Carlo Alberto 10\\
10123 Torino\\ Italy}
 \email{luigi.vezzoni@unito.it}

\begin{document}
\title{Stability of geometric flows of closed forms}
\author{Lucio Bedulli and Luigi Vezzoni}
\date{\today}

\begin{abstract}
We prove a general result about the stability of geometric flows of \lq\lq closed\rq\rq\, sections of vector bundles on compact manifolds.
Our theorem allows us to prove a stability result for the modified Laplacian coflow in ${\rm G}_2$-geometry introduced by Grigorian in \cite{Gri} and for the balanced flow introduced by the authors in \cite{Wlucione}.  
\end{abstract}
\maketitle



\section{Introduction}
In \cite{Bryant} Bryant introduced a new flow in ${\rm G}_2$-geometry which evolves an initial closed ${\rm G}_{2}$-structure along its Laplacian.  Bryant's {\em Laplacian flow}
is a flow of closed $3$-forms and its well-posedness is not standard since the evolution equation is weakly  parabolic only in the direction of closed forms. The short-time existence of the flow on compact manifolds was proved by Bryant and Xu in \cite{BryantXu} introducing a gauge fixing of the flow called  {\em Laplacian-DeTurck flow}  and then applying Nash-Moser theorem.  

In \cite{L2}  Lotay and Wei proved that in the compact case torsion-free ${\rm G}_2$-structures are stable under the Laplacian flow.  This means that if the initial datum is \lq\lq close enough\rq\rq\, to a torsion free ${\rm G}_2$-structure, the Laplacian flow is defined for any positive time $t$ and converges as $t\to\infty$ in $C^\infty$-topology to a torsion-free ${\rm G}_2$-structure.

Following Bryant and Xu ideas, other similar flows have been introduced in ${\rm G}_2$-geometry.  For instance Karigiannis, McKay and Tsui defined in \cite{K} the {\em Laplacian coflow} which is the \lq\lq dual flow\rq\rq\, to the Laplacian flow since it evolves a closed ${\rm G_2}$ $4$-form along its Laplacian. Although  the Laplacian flow and coflow are similar from the geometric point of view, it turns out that their defining equations are quite different from the analytic point of view and the well-posedness of the Laplacian coflow is still an open problem.
To overcome this technical difficulty,  Grigorian modified in \cite{Gri} the Laplacian coflow by introducing two extra terms, one of which depends on a parameter $A$. In the compact case,  this modification is always well-posed for any choice of $A\in \R$ \cite{Gri}, but it has been shown that the  behaviour of the flow may significantly depend on the choice of $A$ (see \cite{Anna1}).

In \cite{Wlucione} the authors showed that the proof of Bryant and Xu about the well-posedness of the Laplacian-DeTurck flow can be generalized to a quite large family of flows proving a general result which allows us to treat short-time behaviour of Grigorian's modified Lapalcian coflow and of a new flow of balanced metrics in Hermitian geometry.

In the same spirit in the present paper we prove a general result about the stability of a significant class of flows around linearly stable static solutions. Our theorem can be used to re-obtain the Lotay-Wei stability of the Laplacian-DeTurck flow around torsion-free $G_2$-structures (which is a significant part of the main theorem in \cite{L2}). 
As a main application of our theorem we prove the stability of the modified Laplacian coflow when the parameter $A$ is zero. Note that in \cite{Gri} it is suggested to consider only the case $A>0$ and big enough, in order to ensure at least initially that the volume increases.
However the term involving the parameter $A$ does not affect the well-posedness of the flow and our result suggests to consider the case $A=0$ as the best choice for the parameter.

Finally the main result of the present paper  applies to the geometric flow of balanced metrics introduced by the authors in \cite{Wlucione} and yields the stability of the flow around Ricci-flat K\"ahler metrics. 

The paper is organized as follows. In section \ref{2} we give the statement of the main result and we declare some notation we will use in the sequel. Section \ref{coflow} is devoted to the proof of the stability of the modified Laplacian coflow around torsion-free ${\rm G}_2$-structures when the parameter $A$ is zero. The proof is obtained by  mixing the use of our main theorem  with some techniques used in \cite{L2} to prove the stability of the Laplacian flow. In section \ref{3} we prove a stability result involving the balanced flow around Calabi-Yau metrics. In the last section we give the proof of the main theorem.

\bigskip 

\noindent {\bf Acknowledgements}. The authors would like to thank Jason Lotay and Gao Chen for useful discussions. The authors are also grateful to the anonymous referee
for raising important points and helping to considerably improve the presentation of the paper.
\section{Statement of the main result}\label{2}
In this section we describe our setting and give the precise statement of our result. \\
Following the terminology introduced in  \cite{Wlucione} a {\em Hodge system} on a compact Riemannian manifold $(M,g)$ consists of a quadruplet $(E_-,E,D,\Delta_D)$, where $E_-$ and $E$  are vector bundles over $M$ with an assigned metric along their fibers, $D\colon C^{\infty}(M,E_-)\to C^{\infty}(M,E)$ and  $\Delta_D\colon C^{\infty}(M,E)\to C^{\infty}(M,E)$
are differential operators such that
\begin{equation}\label{G}
\psi=DGD^*\psi
\end{equation}
for every $\psi\in {\rm Im}\,D$, where $G$ is the Green operator of $\Delta_D$ and  $D^*$ is the formal adjoint of $D$. The foremost example of Hodge system over $M$ is defined by  $E_-=\Lambda^p$, $E=\Lambda^{p+1}$, $D=d$ and $\Delta_D=dd^*+d^*\!d$ is the standard Laplace operator, on a compact Riemannian manifold. Condition \eqref{G} in this case is a consequence of the standard Hodge theory.
Another interesting example of Hodge system occurs in the study of {\em balanced metrics} in complex geometry 
and it is defined by setting $D=i\partial \bar\partial$ and as $\Delta_D$ the {\em Aeppli Laplacian}
(see the discussion in section \ref{3}). 

Given a compact manifold $M$ with a Hodge system $(E_-,E,D,\Delta_D)$, we consider 
an open fiber subbundle $\mathcal E$ of $E$ and 
a  partial differential operator of order $2m$
$$
Q\colon C^{\infty}(M,\mathcal E)\to C^{\infty}(M,E)\,,
$$
and a linear partial differential operator 
$$
D_+\colon C^{\infty}(M,E)\to C^{\infty}(M,E_+)
$$
such that 
$$
{\rm Im}\,D\subseteq \ker\, D_+ \,,
$$
where $E_+$ is a vector bundle over $M$. Let $\Phi=\ker D_+\cap C^{\infty}(M,\mathcal E)$.  We assume 
\begin{enumerate}
\item[1.] $Q(\Phi)\subset {\rm Im}\,D$;

\vspace{0.2cm}
\item[2.] there exists a smooth family of strongly elliptic  linear partial differential operators $L_\varphi\colon C^{\infty}(M,E)\to C^{\infty}(M,E)$, $\varphi\in C^{\infty}(M,\mathcal E)$,
such that 
$$
Q_{*|\varphi}(\psi)=L_{\varphi}(\psi)
$$ 
for every $\varphi\in \Phi$ and $\psi\in {\rm Im}\,D$;

\vspace{0.2cm}
\item[3.] there exists a smooth family of strongly elliptic  linear partial differential operators $l_\varphi\colon C^{\infty}(M,E_-)\to C^{\infty}(M,E_-)$, $\varphi\in C^{\infty}(M,\mathcal E)$,
such that 
$$
Q_{*|\varphi}(D\theta)=Dl_{\varphi}(\theta)
$$
for every $\varphi\in \Phi$ and $\theta \in C^\infty(M,E_-)$. 

\end{enumerate}

In \cite{Wlucione} the authors proved that for every $\varphi_0\in \Phi$ the evolution problem 
\begin{equation}\label{problem}
\partial_t\varphi_t=Q(\varphi_t)\,,\quad \varphi_{|t=0}=\varphi_0\,,\quad \varphi_t\in U\,,
\end{equation}
is always well-posed, where 
$$
U=\{\varphi_0+D\gamma \,\,:\,\,\gamma\in C^\infty(M,E_-)\}\,\cap C^{\infty}(M,\mathcal E)\,.
$$ 

The main result of the present paper is the following 

\begin{theorem} \label{teoremone}
In the situation described above,  let $\bar \varphi\in\Phi$ be such that 
\begin{enumerate}
\item[1.] $Q(\bar\varphi)=0$;

\vspace{0.1cm}
\item[2.] the restriction to $DC^\infty(M,E_-)$ of $L_{\bar \varphi}$ is symmetric and negative definite with respect to the $L^2$ inner product induced by $g$.
\end{enumerate}
Then for every $\epsilon>0$ there exist $\delta>0$ and $C>0$ such that if 
$$
\|\varphi_0-\bar\varphi\|_{C^{\infty}}<\delta
$$
then \eqref{problem} has a unique long-time solution  $\{\varphi_t\}_{t\in [0,\infty)}$ such that 
$$
\|\varphi_t-\bar \varphi\|_{C^{\infty}}<\epsilon\,, \mbox{ and } \quad \|Q(\varphi_t)\|_{C^\infty}\leq C \|Q(\varphi_0)\|_{L^2} \,{\rm e}^{-\lambda t}
$$
for every $t\in [0,\infty)$, where $\lambda$ is half the first positive eigenvalue of $-L_{\bar\varphi}$.  Moreover, $\varphi_t$ 
converges exponentially fast in $C^{\infty}$ topology to  a $\varphi_\infty \in U$ such that $Q(\varphi_\infty)=0$ as $t\to \infty$. 
\end{theorem}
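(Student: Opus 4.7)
The strategy combines the short-time existence guaranteed by \cite{Wlucione} with an exponential $L^2$-decay estimate for the nonlinear term $Q(\varphi_t)$, a Shi-type bootstrap to $C^\infty$ decay, and a standard continuity argument to extend the flow to $[0,\infty)$. The crucial structural observation is that the flow preserves $\Phi$: since $\varphi_0\in\Phi$ and, by assumption (1) on $Q$, $\partial_t\varphi_t=Q(\varphi_t)\in{\rm Im}\,D\subseteq\ker D_+$, one has $\varphi_t\in\Phi$ throughout the interval of existence. In particular $Q(\varphi_t)\in{\rm Im}\,D$, and differentiating $\partial_t\varphi_t=Q(\varphi_t)$ together with assumption (2) yields
\[
\partial_t Q(\varphi_t)=L_{\varphi_t}\bigl(Q(\varphi_t)\bigr),
\]
a genuinely strongly parabolic equation for $Q(\varphi_t)$, despite the original flow being only weakly parabolic away from ${\rm Im}\,D$.

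The basic $L^2$ energy estimate then follows at once. From the previous display,
\[
\tfrac{d}{dt}\|Q(\varphi_t)\|_{L^2}^2=2\langle L_{\varphi_t}Q(\varphi_t),Q(\varphi_t)\rangle_{L^2}.
\]
By hypothesis $L_{\bar\varphi}$ is symmetric and negative definite on ${\rm Im}\,D$ with first eigenvalue of $-L_{\bar\varphi}$ equal to $2\lambda$, and strong ellipticity of order $2m$ together with this spectral gap give the coercive bound $\|\psi\|_{H^m}^2\le c\,\langle -L_{\bar\varphi}\psi,\psi\rangle$ for every $\psi\in{\rm Im}\,D$. The perturbation $L_{\varphi_t}-L_{\bar\varphi}$ is a differential operator of order at most $2m$ whose coefficients are controlled by $\|\varphi_t-\bar\varphi\|_{C^{2m}}$, so a standard quadratic-form perturbation argument shows that $\langle L_{\varphi_t}\psi,\psi\rangle_{L^2}\le -\lambda\|\psi\|_{L^2}^2$ for all $\psi\in{\rm Im}\,D$, as soon as $\varphi_t$ is sufficiently $C^{2m}$-close to $\bar\varphi$. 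Gronwall then yields $\|Q(\varphi_t)\|_{L^2}\le e^{-\lambda t}\|Q(\varphi_0)\|_{L^2}$.

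To upgrade to $C^\infty$ decay I would derive Shi-type a priori estimates: differentiate the parabolic equation for $Q(\varphi_t)$ in space, commute $\nabla^k$ past $L_{\varphi_t}$ at the cost of lower-order terms, and combine the resulting energy inequalities with interpolation and Sobolev embedding to obtain $\|Q(\varphi_t)\|_{C^k}\le C_k\,\|Q(\varphi_0)\|_{L^2}\,e^{-\lambda t}$ for every $k$. Integrating $\partial_t\varphi_t=Q(\varphi_t)$ in time and using $\|Q(\varphi_0)\|_{L^2}\le C\|\varphi_0-\bar\varphi\|_{C^{2m}}\le C\delta$ yields $\|\varphi_t-\varphi_0\|_{C^k}\le C_k'\delta$ for every $k$; hence for $\delta$ small enough $\varphi_t$ stays in $\mathcal E$ and inside the $C^{2m}$-neighborhood where the energy estimate applies. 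A continuity argument extends the flow to $[0,\infty)$, and the exponential summability of $\partial_t\varphi_t$ in every $C^k$ forces $C^\infty$-convergence $\varphi_t\to\varphi_\infty\in U$ with $Q(\varphi_\infty)=0$. The main obstacle I anticipate is precisely this higher-order bootstrap: because the original flow is only weakly parabolic, one must exploit cleanly that the \emph{image} quantity $Q(\varphi_t)$ satisfies a strongly parabolic equation and translate its decay back into control on $\varphi_t$ itself through the identity $\partial_t\varphi_t=Q(\varphi_t)$, without losing derivatives in the commutator estimates.
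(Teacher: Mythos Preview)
Your $L^2$ energy estimate for $Q(\varphi_t)$ is essentially the paper's: differentiate the flow to get $\partial_t Q(\varphi_t)=L_{\varphi_t}Q(\varphi_t)$, perturb the quadratic form of $L_{\bar\varphi}$, and apply Gronwall. The paper formalizes the perturbation step as an abstract Hilbert-space lemma (their Lemma~\ref{Hilbert} and Corollary~\ref{corollario_a}), but the content is the same.

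The gap is exactly where you flag it: the higher-order bootstrap. In the general setting of the theorem, the coefficients of $L_{\varphi_t}$ depend on $\varphi_t$ and its derivatives, so commuting $\nabla^k$ through $L_{\varphi_t}$ produces terms involving $\nabla^{k+?}\varphi_t$ with $?$ possibly as large as $2m$. A Shi-type scheme would require the constants $C_k$ to depend only on a \emph{fixed} low-order norm of $\varphi_t$; you assert this but do not prove it, and in this generality it is not available. Without it your argument is circular: control of $\nabla^k\varphi_t$ comes from integrating $\nabla^k Q(\varphi_t)$, whose estimate already needs $\nabla^{k+?}\varphi_t$.

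The paper resolves this loss of derivatives not by Shi-type estimates but by the Nash--Moser/tame Fr\'echet machinery from \cite{Wlucione}. First, a weak stability result (Proposition~\ref{weakstability}) uses Nash--Moser to show that for any fixed $T$, if $\varphi_0$ is close enough to $\bar\varphi$ then $\|\varphi-\bar\varphi\|_{\mathcal F^l[0,T]}$ is small for \emph{every} $l$. Then an interior estimate (Lemma~\ref{intest_gen}) for the linearized equation $\partial_t\sigma=L_{\varphi_t}\sigma$ gives $\|\sigma\|_{\mathcal F^{2nr}[t_0+\epsilon,T]}\le C\|\sigma\|_{\mathcal F^0[t_0,T]}$, where $C$ is allowed to depend on $\|\varphi-\bar\varphi\|_{\mathcal F^{l(n)}}$ with $l(n)$ growing in $n$; this loss is harmless precisely because Nash--Moser already controls all $\mathcal F^l$ norms. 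Combined with a trace-type estimate (Proposition~\ref{mante}) this yields $\|Q(\varphi_t)\|_{H^n}\le C\|Q(\varphi_0)\|_{L^2}e^{-\lambda t}$ on $[\epsilon,T]$. Long-time existence is then obtained not by a continuity argument but by iterating: the exponential decay guarantees $\|\varphi_{T-\epsilon}-\bar\varphi\|$ is again below the threshold, so one re-applies the finite-time result on $[T-\epsilon,2T-2\epsilon]$, and the geometric series of increments is summable.

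In short, your plan identifies the right evolution equation for $Q(\varphi_t)$ and the right energy mechanism, but replaces the paper's Nash--Moser step by a Shi-type step that you have not (and in this generality likely cannot) justify. That is the missing idea.
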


Whenever we write $\|f\|_{C^{\infty}}<\epsilon$, we mean that $\|f\|_{C^k}<\epsilon$ for every $k\in \mathbb N$.
In the statement above the $C^k$-norms and the $L^2$-norm are with respect to the background metric $g$. 
Conditions 1. and 2. in theorem \ref{teoremone} say that $\bar \varphi$ is a  linearly stable fixed point of the flow. Hence roughly speaking the theorem says that  linearly stable fixed points of the class of flows we are considering  are indeed 
dynamically stable.  

\section{From theorem \ref{teoremone} to the stability of  the  modified Laplacian coflow}\label{coflow}

Let $(M,\varphi_0)$ be a compact manifold with a fixed ${\rm G}_2$-structure. The {\em Laplacian flow} is defined as 
\begin{equation}
\label{G2flow}
\partial_t \varphi_t=\Delta_{\varphi_t}\varphi_t\,,\quad d\varphi_t=0\,,\quad \varphi_t\in C^{\infty}(M,\Lambda^3_+)\,, \quad \varphi_{|t=0}=\varphi_0\,,
\end{equation}
where $\Lambda^3_+$ is the fiber bundle whose sections are ${\rm G}_2$-forms on $M$ and for any $\varphi \in C^\infty(M,\Lambda^3_+)$ the Laplacian operator induced by ${\varphi}$ is denoted by $\Delta_{\varphi}$.
The well-posedness of the flow was proved by Bryant and Xu in \cite{BryantXu} applying Nash-Moser inverse function theorem to  the  gauge fixing of the flow given by the following proposition.

\begin{prop}[Bryant-Xu]\label{BXpre} 
There exists a smooth map $V\colon C^{\infty}(M,\Lambda^3_+)\mathcal \to C^{\infty}(M,TM)$ such that the operator 
$$
Q\colon C^{\infty}(M,\Lambda^3_+)\to\Omega^3(M),\quad Q(\varphi)=\Delta_{\varphi}\varphi+\mathcal{L}_{V(\varphi)}\varphi
$$
satisfies 
$$
Q_{*|{\varphi}}(\sigma)=-\Delta_{\varphi}\sigma+d\Psi(\sigma)
$$
for every closed  ${\rm G}_2$-structure $\varphi$  and $\sigma \in d\Omega^2(M)$, where $\mathcal L$ is the Lie derivative and $\Psi$ is an algebraic linear operator on $\sigma$ with coefficients depending on the
torsion of $\varphi$ in a universal way.
\end{prop}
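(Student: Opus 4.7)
The plan is to adapt DeTurck's gauge-fixing idea to the $G_2$-setting, following Bryant and Xu's original argument in \cite{BryantXu}. On closed $G_2$-structures we have $\Delta_\varphi\varphi = dd^*_\varphi\varphi$, and since $d^*_\varphi = -*_\varphi d *_\varphi$ while $\psi_\varphi := *_\varphi\varphi$ is algebraic in $\varphi$ (through the $\varphi$-induced metric), the right-hand side reduces to $-d*_\varphi d\psi_\varphi$. I would first linearize this in a direction $\sigma$ at a closed $\varphi$: the result splits into a second-order piece $-d*_\varphi d\mathcal{A}_\varphi(\sigma)$, where $\mathcal{A}_\varphi$ is the algebraic linearization of $\varphi\mapsto \psi_\varphi$, and a zeroth-order piece coming from differentiating $*_\varphi$ paired against $d\psi_\varphi$, which carries the torsion of $\varphi$ (for closed $G_2$, $d\psi_\varphi = \tau_2\wedge\varphi$).

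To analyze the principal symbol I would use the $G_2$-decomposition $\Lambda^3 = \Lambda^3_1\oplus\Lambda^3_7\oplus\Lambda^3_{27}$ and write $\sigma = \tfrac{1}{4}\lambda\varphi + *_\varphi(\beta\wedge\varphi) + \gamma$. A direct calculation shows that on the $\Lambda^3_1$- and $\Lambda^3_{27}$-summands the symbol of $\sigma\mapsto -d*_\varphi d\mathcal{A}_\varphi(\sigma)$ already equals $-|\xi|^2\,\mathrm{id}$, matching $-\Delta_\varphi$; the obstruction to strong ellipticity concentrates in the $\Lambda^3_7$-summand, producing an extra symbol term proportional to $\xi\wedge\iota_{\xi^\sharp}\varphi$ paired with $\beta$. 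To absorb this, I would define $V(\varphi)$ as a universal first-order expression in $\varphi$ built from the full torsion tensor $T_\varphi$, chosen so that its linearization enters $\mathcal{L}_{V(\cdot)}\varphi$ with principal symbol exactly cancelling the offending $\Lambda^3_7$-term. Since $d\varphi = 0$ we have $\mathcal{L}_{V(\varphi)}\varphi = d\iota_{V(\varphi)}\varphi$, which automatically keeps the correction in the desired form $d(\cdot)$.

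Having matched symbols, the remaining contributions to $Q_{*|\varphi}(\sigma)$ are at most first order in $\sigma$; restricting to $\sigma\in d\Omega^2(M)$ ensures that the only surviving top-order symbol is $-|\xi|^2\,\mathrm{id}$, and the subleading terms reassemble as $d\Psi(\sigma)$ with $\Psi$ an algebraic endomorphism of $\Lambda^3$ whose coefficients are universal functions of $T_\varphi$ — the zeroth-order piece from differentiating $*_\varphi$ fits into this shape because $d\psi_\varphi$ is algebraic in the torsion. I expect the main obstacle to be the detailed $G_2$-representation-theoretic bookkeeping needed to pin down the precise constants defining $V(\varphi)$ and to verify that the Lie-derivative correction cancels the $\Lambda^3_7$-symbol term \emph{exactly} rather than only up to a further elliptic error; this is essentially where Bryant and Xu do the heavy lifting in \cite{BryantXu}, and my plan is to follow their computation step by step, with the smoothness of $V\colon C^\infty(M,\Lambda^3_+)\to C^\infty(M,TM)$ being automatic since both the torsion extraction and the metric contraction defining $V(\varphi)$ are smooth in $\varphi$.
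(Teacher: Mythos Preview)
The paper does not prove this proposition: it is quoted from Bryant--Xu \cite{BryantXu}, and the only additional content the paper supplies is the explicit formula for the DeTurck vector field,
\[
V(\varphi)^i = c_1\, g^{pq} T^i_{pq} + c_2\, g^{ki} T^j_{jk}, \qquad T^\varphi := \nabla^\varphi - \nabla^0,
\]
where $\nabla^0$ is a \emph{fixed background torsion-free connection} on $M$. Your outline of the argument --- linearize $\Delta_\varphi\varphi = -d*_\varphi d\psi_\varphi$ on closed structures, decompose under $\Lambda^3=\Lambda^3_1\oplus\Lambda^3_7\oplus\Lambda^3_{27}$, locate the non-elliptic part of the symbol in the $\Lambda^3_7$-direction, and cancel it via $\mathcal{L}_{V}\varphi = d\iota_V\varphi$ --- is faithful to the Bryant--Xu strategy, so there is nothing in the paper to compare against beyond that formula.

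One point to correct: you describe $V(\varphi)$ as ``built from the full torsion tensor $T_\varphi$'', i.e.\ from the intrinsic ${\rm G}_2$-torsion. The $V$ recalled in the paper (and in \cite{BryantXu}) is the standard DeTurck vector field, built from the \emph{connection-difference tensor} $\nabla^\varphi-\nabla^0$, and so depends on the auxiliary choice of $\nabla^0$. This dependence is the whole point of the DeTurck trick: it breaks diffeomorphism invariance so that the linearization of $V$ contributes exactly the linearized-Christoffel-symbol terms needed to repair the principal symbol on $\Lambda^3_7$. The ${\rm G}_2$-torsion of $\varphi$ enters only through the lower-order operator $\Psi$, not through $V$ itself; when you carry out the details, keep the background-connection formulation.
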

We briefly recall the definition of the map $V$ since we need it for studying the modified Laplacian coflow.  Let $\nabla^0$ be a fixed background torsion-free connection on $M$. For any $\f \in C^{\infty}(M,\Lambda^3_+)$ let $\nabla^\f$ be the Levi-Civita connection of the metric induced by $\f$.  Let 
$$
T^{\f}=\nabla^{\f}-\nabla^0.
$$
We can locally write $T^{\f}=\frac12 T_{jk}^i\partial_{x^i}\otimes dx^j\circ dx^k$. Then $V$ is locally defined as 
\begin{equation}\label{V}
V(\f)^i=c_1\, g^{pq}T^{i}_{pq}+c_2\,g^{ki}T^{j}_{jk}
\end{equation}
where $c_1$ and $c_2$ are universal constants. 

The \lq\lq modified\rq\rq\,\, Laplacian flow 
\begin{equation}\label{modifiedflow}
\partial_t \varphi_t=\Delta_{\varphi_t}\varphi_t+\mathcal{L}_{V(\varphi_t)}\varphi_t\,,\quad d \varphi_t=0\,,\quad  \varphi_t\in C^{\infty}(M,\Lambda^3_+)\,, \quad  \varphi_{|t=0}=\varphi_0\,,
\end{equation} 
is often called the {\em Laplacian-DeTurck} flow. 

\medskip
In \cite{L2} Lotay and Wei proved the following stability result about Laplacian flow   
\begin{theorem}[Lotay-Wei \cite{L2}]\label{Lotay}
Let $\bar{\varphi}$ be a torsion-free ${\rm G}_2$-structure on a compact $7$-manifold $M$. There exists $\delta>0$ such that for any closed ${\rm G}_2$-structure  $\varphi_0$ cohomologous to $\bar \varphi$  and satisfying $\|\varphi_0-\bar\varphi\|_{C^{\infty}} < \delta$,
the Laplacian flow \eqref{G2flow} with initial value $\varphi_0$ exists for all $t\in [0,\infty)$ and
converges in $C^{\infty}$-topology to $\varphi_\infty	\in {\rm Diff}^0\cdot \bar{\varphi}$ as $t\to \infty$. 
\end{theorem}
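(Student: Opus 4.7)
The plan is to deduce Theorem \ref{Lotay} from Theorem \ref{teoremone} applied to the Laplacian--DeTurck flow \eqref{modifiedflow}, and then convert the information about the modified flow back to the unmodified Laplacian flow by integrating the DeTurck vector field $V(\varphi_t)$ into a $1$-parameter family of diffeomorphisms whose pull-back turns solutions of \eqref{modifiedflow} into solutions of \eqref{G2flow}.

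First I would set up the Hodge system required by Theorem \ref{teoremone}: take the de Rham system with $E_- = \Lambda^2$, $E = \Lambda^3$, $D = d$ and $\Delta_D$ the Hodge Laplacian, together with $\mathcal E = \Lambda^3_+$ and $D_+ = d\colon \Lambda^3 \to \Lambda^4$, so that $\Phi$ is the space of closed $G_2$-structures and the affine space $U = \bar\varphi + d\Omega^2(M)$ is exactly the de Rham cohomology class of $\bar\varphi$. Take $Q$ to be the Bryant--Xu operator of Proposition \ref{BXpre}. Conditions $1.$--$3.$ of the framework then follow from Proposition \ref{BXpre} together with the identity $\mathcal L_V\varphi = d(i_V\varphi)$ for closed $\varphi$. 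The hypotheses of Theorem \ref{teoremone} also hold at $\bar\varphi$: since $\bar\varphi$ is torsion-free, $V(\bar\varphi) = 0$ by \eqref{V} and $\Delta_{\bar\varphi}\bar\varphi = 0$, so that $Q(\bar\varphi) = 0$; and since $\Psi$ is algebraic in the torsion, the linearization reduces at $\bar\varphi$ to $L_{\bar\varphi} = -\Delta$ on $d\Omega^2$. This restriction is $L^2$-symmetric and satisfies $\langle -\Delta(d\alpha), d\alpha\rangle_{L^2} = -\|d^*d\alpha\|_{L^2}^2 \leq 0$ with equality only when $d\alpha$ is simultaneously exact and harmonic, hence zero; so $L_{\bar\varphi}$ is negative definite on $d\Omega^2$.

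Theorem \ref{teoremone} then yields, for any sufficiently small $\delta$ and any $\varphi_0 \in U$ with $\|\varphi_0 - \bar\varphi\|_{C^\infty} < \delta$, a long-time solution $\varphi_t$ of \eqref{modifiedflow} that stays uniformly $C^\infty$-close to $\bar\varphi$, converges exponentially in $C^\infty$ to some $\varphi_\infty \in U$ with $Q(\varphi_\infty) = 0$, and satisfies $\|Q(\varphi_t)\|_{C^\infty} \leq C e^{-\lambda t}$. To recover the Laplacian flow I would integrate the DeTurck vector field: let $\psi_t \in \mathrm{Diff}^0(M)$ be the unique isotopy with $\psi_0 = \mathrm{id}$ and $\partial_t \psi_t = -V(\varphi_t) \circ \psi_t$. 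A direct computation then shows that $\tilde\varphi_t := \psi_t^*\varphi_t$ solves \eqref{G2flow} with the same initial value $\varphi_0$. To obtain convergence of $\psi_t$ in $\mathrm{Diff}^0(M)$ and hence convergence of $\tilde\varphi_t$ in $C^\infty$ to $\tilde\varphi_\infty := \psi_\infty^*\varphi_\infty$, it suffices to establish integrability in $t$ of $\|V(\varphi_t)\|_{C^\infty}$, which, granted $V(\varphi_\infty) = 0$, follows from the exponential $C^\infty$ convergence $\varphi_t \to \varphi_\infty$ and the smooth dependence $\varphi \mapsto V(\varphi)$.

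The main obstacle is therefore showing $V(\varphi_\infty) = 0$, equivalently that the stationary point $\varphi_\infty$ of the Laplacian--DeTurck flow is genuinely torsion-free. This is the gauge-fixing half of the Bryant--Xu argument: for a closed $G_2$-structure sufficiently $C^\infty$-close to $\bar\varphi$, the equation $Q(\varphi) = 0$ combined with $d\varphi = 0$ forces both $V(\varphi) = 0$ and $\Delta_\varphi\varphi = 0$, the latter being equivalent to torsion-freeness in the closed case. Granted this, $\tilde\varphi_\infty$ is a torsion-free $G_2$-structure cohomologous and $C^\infty$-close to $\bar\varphi$, so by the smoothness of Joyce's moduli space of torsion-free $G_2$-structures (and the cohomology parametrization of its local model) one concludes $\tilde\varphi_\infty \in \mathrm{Diff}^0 \cdot \bar\varphi$, as required.
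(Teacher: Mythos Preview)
Your overall strategy---apply Theorem~\ref{teoremone} to the Laplacian--DeTurck flow \eqref{modifiedflow} and then undo the gauge via the flow of $-V$---matches the paper's. The differences, and the one genuine soft spot, lie in two sub-steps.

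\emph{Identifying $\varphi_\infty$.} The paper does not argue that $Q(\varphi_\infty)=0$ forces torsion-freeness and $V(\varphi_\infty)=0$ via any ``gauge-fixing half of Bryant--Xu''; no such statement appears in \cite{BryantXu}, which is purely about short-time existence. Instead the paper invokes lemma~4.2 of \cite{L2}, an $L^2$-decay estimate showing $\|\tilde\varphi_t-\bar\varphi\|_{L^2}\to 0$ exponentially, whence $\varphi_\infty=\bar\varphi$. Your conclusion is in fact correct, but for a reason you do not give: since $L_{\bar\varphi}=-\Delta_{\bar\varphi}$ is an isomorphism of $d\Omega^2$ onto itself, the implicit function theorem makes $\bar\varphi$ an \emph{isolated} zero of $Q$ within its cohomology class, so necessarily $\varphi_\infty=\bar\varphi$. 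Once you have this, both $V(\varphi_\infty)=0$ and torsion-freeness are immediate, and your appeal to Joyce's moduli theory becomes superfluous---and is in any case misapplied, since it is $\varphi_\infty$, not $\tilde\varphi_\infty=\psi_\infty^*\varphi_\infty$, that is $C^\infty$-close to $\bar\varphi$.

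\emph{Convergence of the Laplacian flow.} The paper does not argue via $C^\infty$ convergence of $\psi_t$; it refers to the Shi-type estimates of \cite{L1} (carried out in detail for the coflow in Step~3 of Theorem~\ref{coflow}). Your route is plausible but incomplete as written: you need a Jacobian lower bound (as in \eqref{<>}) to ensure the limit map $\psi_\infty$ is a diffeomorphism, and a Gronwall argument to control the higher derivatives of $\psi_t$ uniformly in $t$.
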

In the statement above the $C^k$-norms are meant with respect to the metric induced by $\bar \varphi$.
The proof of Lotay-Wei theorem in \cite{L2} can be subdivided in two steps: in the first step it is proved the stability of the Laplacian-DeTurck
 flow and in the second step it is recovered the stability of the Laplacian flow.   The stability of the Laplacian-DeTurck 
 flow can be deduced from our theorem \ref{teoremone} taking into account lemma 4.2 in \cite{L2}. 
Indeed, according to our setting we put 
$$
E_-=\Lambda^2M\,,\quad E=\Lambda^3M\,,\quad E_+=\Lambda^4M\,\quad \mathcal{E}=\Lambda^3_+
$$
$$
D=d\colon \Omega^2(M)\to \Omega^3(M)\,,\quad D_+=d\colon \Omega^3(M)\to \Omega^4(M)
$$
and we take $\Delta_D\colon \Omega^3(M)\to \Omega^3(M)$ to be the Laplacian induced by a fixed background Riemannian metric. 
Furthermore $\Phi$ is the space of closed ${\rm G}_2$-forms  on $M$ and for $\varphi\in \Phi$ we take 
$$
\begin{aligned}
L_{\varphi}&=-\Delta_{\varphi}+d\Psi\,,\mbox{ on $3$-forms};\\
l_{\varphi}&=-\Delta_{\varphi}+\Psi\,,	\,\,\,\mbox{ on $2$-forms}\,,
\end{aligned}
$$
where $\Psi$ is defined in proposition \ref{BXpre}.
If $\bar \varphi$ is a  torsion free ${\rm G}_2$-structure, then $Q(\bar \varphi)=0$ and the restriction of $L_{\bar \varphi}$ to $d\Omega^2(M)$  is $-\Delta_{\bar \varphi}$. Therefore theorem \ref{teoremone} implies that for every $\epsilon>0$ there exists $\delta>0$ such that if $\varphi_0$ is a closed ${\rm G}_2$-structure cohomologous to $\bar \varphi$ satisfying 
$\|\varphi_0-\bar\varphi\|_{C^{\infty}}<\delta$, then flow \eqref{modifiedflow} with initial value $\varphi_0$  has a long-time solution $\tilde\varphi_t$ defined for $t\in [0,\infty)$ such that $\|\tilde \varphi_t - \bar \varphi\|_{C^\infty} < \epsilon$ and $\tilde\varphi_t$ converges in $C^{\infty}$-topology to some torsion-free ${\rm G}_2$-structure $\tilde \varphi_{\infty}$ in $[\bar \varphi]\in H^3(M,\R)$. 
Now lemma 4.2 of \cite{L2} implies that $\tilde \varphi_\infty=\bar \varphi$ if $\epsilon$ is taken small enough. Indeed lemma 4.2 of \cite{L2} implies that for $\epsilon$ small enough the $L^2$-norm of $\tilde \varphi_t-\bar\varphi$ decays exponentially and consequently $\tilde \varphi_{\infty}=\bar\f$.
The long-time existence of the Laplacian flow 
easily follows. Indeed, let $\phi_t$ be the curve of diffeomorphisms solving 
$$
\partial_t\phi_t=-V(\tilde \varphi_t)_{|\phi_t}\,\quad \phi_{0}={\rm Id}_M\,,
$$
then $\varphi_t:=\phi_t^*(\tilde \varphi_t)$, $t\in [0,\infty)$, is a long-time solution of the Laplacian flow with initial condition $\varphi_{|t=0}=\varphi_0$.  The convergence of the flow in theorem \ref{Lotay} is proved in \cite{L2} by using the Shi-type estimates for the Laplacian flow proved in \cite{L1}.  

\medskip
Next we focus on the Laplacian coflow. 
The {\em Laplacian coflow} is the analogue of the Laplacian flow where the initial ${\rm G}_2$-structure is assumed to be coclosed instead of closed.  
Indeed a ${\rm G}_2$-structure $\varphi$ on a smooth manifold $M$ can be alternatively given by the $4$-form $\psi=*_{\varphi}\varphi$ and the fixed orientation. In \cite{K}  it is defined the Laplacian coflow
$$
\partial_t\psi_t=\Delta_{\psi_t}\psi_t\,,\quad d\psi_t=0\,,\quad \quad \psi_t\in C^{\infty}(M,\Lambda^4_+)\,,\quad \psi_{|t=0}=\psi_0\,,
$$
where here $\Lambda^4_+$ is the fiber bundle whose sections are ${\rm G}_2$-$4$-forms and $\Delta_{\psi_t}$ is the Laplacian operator induced by ${\psi_t}$. 
Unlike the Laplacian flow, there is no known gauge fixing that makes the Laplacian coflow parabolic, for this reason  
in \cite{Gri} Grigorian proposed the following modification
\begin{equation}\label{modified coflow}
\partial_t\psi_t=\Delta_{\psi_t}\psi_t+ 2d((A -{\rm tr}\, T_{\psi_t})*_{\psi_t}\psi_t)\,,\quad d\psi_t=0\,,\quad \quad \psi_t\in C^{\infty}(M,\Lambda^4_+)\,,\quad \psi_{|t=0}=\psi_0\,,
\end{equation}
where $A$ is a constant and for a ${\rm G_2}$ $4$-form $\psi\in C^{\infty}(M,\Lambda^4_+)$ the function ${\rm tr} \,T_{\psi}$ is the trace with respect to the metric $g$ induced by $\psi$ of the torsion tensor 
$$
T_{\psi}(X,Y):=\frac{1}{24}g(\nabla_X *_{\psi}\psi, \iota_Y \psi)
$$ 
for $X,Y$ in $C^{\infty}(M,TM)$, where $\nabla$ is the Levi-Civita connection of $g$. It is not difficult to see that  
$$
{\rm tr} \,T_\psi=\frac14 *_{\psi} (d*_{\psi}\psi\wedge*_{\psi}\psi)\,. 
$$

Under this modification the flow is still not parabolic, but it can be further modified by using a DeTurck trick.  

In this section we revise Grigorian's proof of the well-posedness of the Laplacian coflow in order to show how the flow fits in our setting in the case $A=0$.

We first recall how the space of smooth forms on a ${\rm G}_2$-manifold splits into a direct sum of irreducible modules (we refer to \cite{Bryant} for details). Given a ${\rm G}_2$-manifold $(M,\f)$
the space of $2$-forms and $3$-forms split in irreducible ${\rm G}_2$-modules as 
$$
\Omega^2(M)=\Omega^{2}_{14} (M)\oplus\Omega^2_{7}(M)\,,\quad\quad  \Omega^3(M)=\Omega^{3}_{27}(M)\oplus \Omega^{3}_{7}(M)\oplus \Omega^{3}_{1}(M)
$$ 
where 
$$
\begin{aligned}
\Omega^2_{7}(M)&=\{*_{\f}(\alpha \wedge *_{\f}\f)\,\,:\,\,\alpha\in \Omega^1(M)\}\,,\\
\Omega^{2}_{14}(M)&=\{\alpha\in \Omega^2(M)\,\,:\,\,\alpha\wedge \f=-*_\f\alpha \}
\end{aligned}
$$
and
\begin{eqnarray*}
&\Omega^{3}_{27}(M)&=\{\alpha\in \Omega^3(M)\,\,:\,\,\alpha\wedge \f=\alpha\wedge *_\f\f=0\}\,,\\
&\Omega^{3}_{7}(M)&=\{*_\f(\alpha\wedge \f)\,\,:\,\, \alpha\in \Omega^1(M)\}\,,\\
&\Omega^{3}_{1}(M)&=\{f\,\f\,:\,f\in C^\infty(M)\}\,. 
\end{eqnarray*}
The space of symmetric $2$-tensors $S^2(M)$ on $M$ is isomorphic to  $\Omega^{3}_{1}(M)\oplus \Omega^{3}_{27}(M)$ via the map 
$\mathsf{i}_\f\colon S^2(M)\to \Omega^{3}_{1}(M)\oplus \Omega^{3}_{27}(M)$ locally defined as 
$$
\mathsf{i}_\f(h)=h_r^l\f_{lsk}dx^r\wedge dx^s\wedge dx^k
$$
for every $h=h_{rs}dx^r \circ dx^s$, where $\f_{lsk}$ are the components of $\f$ in the coordinates $\{x^1,\dots,x^7\}$. 
Although the following lemma arises from \cite{Gri}, we prefer to give a proof of it in order to frame the Laplacian coflow in our setting and point out that the vector field needed to apply the DeTurck trick  is the same used in the Laplacian flow. 

\begin{lemma}\label{lemmaG}
Let $Q\colon C^{\infty}(M,\Lambda^4_+)\to \Omega^{4}(M)$ be defined as 
$$
Q(\psi)=\Delta_{\psi}\psi+2d((A-{\rm tr}\, T_{\psi})*_\psi \psi)+\mathcal{L}_{V(*_{\psi}\psi)}\psi\,,
$$
where $V(*_{\psi}\psi)$ is defined  in \eqref{V}. 
Let $\{\psi_t\}_{t\in (-\epsilon,\epsilon)}$ be a smooth curve in $C^{\infty}(M,\Lambda^4_+)$ and 
$$
\f_t=*_{\psi_t}\psi_t\,,
\quad \dot\psi=\partial_{t|t=0}\psi_t \,,\quad \dot\f=\partial_{t|t=0}\f_t\,.
$$ 
Then 
$$
\partial_{t|t=0}Q(\psi_t)=-\Delta_{\psi_0}\dot \psi+2Ad\dot \f+d\Psi(\dot \psi)
$$
where $\Psi$ is  an algebraic linear operator on $\dot \psi$ with coefficients depending on the
torsion of $\psi_0$ in a universal way.
\end{lemma}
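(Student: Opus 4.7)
The plan is to compute the linearization of each of the three summands of $Q$ separately, writing $Q = Q_\Delta + Q_{\rm mod} + Q_V$ with $Q_\Delta(\psi) = \Delta_\psi\psi$, $Q_{\rm mod}(\psi) = 2d((A-\tr T_\psi)*_\psi\psi)$, and $Q_V(\psi) = \mathcal{L}_{V(*_\psi\psi)}\psi$.

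For $Q_{\rm mod}$, the chain rule yields immediately
$$
\partial_{t|t=0}Q_{\rm mod}(\psi_t) = 2Ad\dot\varphi - 2d\bigl((\partial_{t|t=0}\tr T_{\psi_t})\,\varphi_0\bigr) - 2d(\tr T_{\psi_0}\,\dot\varphi),
$$
which already isolates the $2Ad\dot\varphi$ summand appearing in the claim. For $Q_V$, Cartan's formula $\mathcal{L}_V\psi = d\iota_V\psi + \iota_V d\psi$ combined with the explicit description \eqref{V} of $V$ in terms of the difference of Christoffel symbols gives the linearization in terms of $\dot V$ (which is first-order in $\dot\psi$ through the metric variation) and $\dot\psi$ itself. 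For $Q_\Delta$, we write $\Delta_\psi = dd^*_\psi + d^*_\psi d$ and use the identity $d^*_\psi\psi = -*_\psi d\varphi_\psi$ to put the Laplacian into a form well suited to linearization.

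The core of the argument is then to show that, when combined, all the second-order contributions in $\dot\psi$ coming from the three pieces collapse to $-\Delta_{\psi_0}\dot\psi$, while the remaining first-order and algebraic terms assemble into $d\Psi(\dot\psi)$ with $\Psi$ algebraic linear in $\dot\psi$ and coefficients depending universally on the torsion of $\psi_0$. Two design features play the decisive role, as observed by Grigorian in \cite{Gri}: the modification $-2d(\tr T_\psi\,\varphi_\psi)$ is precisely engineered so that, on closed variations $\dot\psi$ (the directions relevant for the Hodge system), the second-order portion of $Q_\Delta + Q_{\rm mod}$ already matches $-\Delta_{\psi_0}$; and the DeTurck field $V$, identical to the one used by Bryant-Xu for the Laplacian flow in Proposition \ref{BXpre}, extends this principal symbol to arbitrary variations $\dot\psi$.

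The main obstacle is the bookkeeping of second-order contributions arising from $\partial_t\tr T_{\psi_t}$ and $\partial_t*_{\psi_t}$. At first sight, the term $-2d\bigl((\partial_{t|t=0}\tr T_{\psi_t})\,\varphi_0\bigr)$ contains a second-order piece roughly of the form $d\bigl(*_{\psi_0}(d\dot\varphi\wedge\varphi_0)\,\varphi_0\bigr)$ that has no obvious reason to vanish. Verifying that it cancels against matching second-order contributions from $Q_\Delta$ and $Q_V$, and that the residue exactly reproduces $-\Delta_{\psi_0}\dot\psi$, requires the ${\rm G}_2$-decomposition of $\Omega^2(M)$ and $\Omega^3(M)$ recalled above, the identity $\tr T_\psi = \tfrac14 *_\psi(d*_\psi\psi\wedge*_\psi\psi)$, and the explicit formula \eqref{V} for $V$ together with the standard identities for the variation of the Hodge star along a smooth family of metrics.
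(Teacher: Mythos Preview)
Your outline follows the same overall strategy as the paper's proof, but it remains a plan rather than a proof, and your account of how the three summands share the work is not accurate in a way that matters for the computation.

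You assert that on closed variations $\dot\psi$ the combination $Q_\Delta + Q_{\rm mod}$ already has principal part $-\Delta_{\psi_0}$, with $Q_V$ serving only to extend this to arbitrary variations. This is not how the cancellation goes. Even restricted to closed $\dot\psi$, the linearization of $\Delta_\psi\psi - 2d(\tr T_\psi\,*_\psi\psi)$ is still degenerate along diffeomorphism directions; the DeTurck term is indispensable there. What actually happens (and what the paper computes, following Grigorian) is closer to the reverse ordering: writing $\dot\psi = 4f^0\psi + f^1\wedge\varphi - *_\varphi f^3$ in ${\rm G}_2$-irreducible components, the linearization of $\Delta_\psi\psi + \mathcal{L}_{V(*_\psi\psi)}\psi$ on closed $\dot\psi$ differs from $-\Delta_{\psi_0}\dot\psi$ at second order by a single residual piece $2\,d(d^7_1 f^1\,\varphi)$, and it is the linearization of $-2d(\tr T_\psi\,*_\psi\psi)$ that cancels exactly this term (since $\partial_{t|t=0}\tr T_{\psi_t} = d^7_1 f^1 + {\rm l.o.t.}$). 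Your heuristic would lead you to look for the wrong cancellations.

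The paper's proof executes this by working in the Bryant--Xu formalism $d^s_r$, computing the potentials $p(\dot\psi)$ and $q(\dot\psi)$ for $\partial_t\Delta_{\psi_t}\psi_t$ and $\partial_t\mathcal{L}_{V}\psi_t$ explicitly, and then using the constraint $d\dot\psi=0$ to eliminate $d^{27}_7 f^3$ in terms of $df^0$ and $d^7_7 f^1$. This last step is essential and does not appear in your sketch; without it the second-order terms do not visibly reduce to $-\Delta_{\psi_0}\dot\psi$. All of the content of the lemma lies in this bookkeeping, which your proposal acknowledges is required but does not carry out.
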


\begin{proof}
In this proof we use the same notation as in \cite{Bryant,BryantXu} denoting by $d_{r}^s$ the projection of $d$ onto $\Omega_r^{s}(M)$. We also set $\psi=\psi_0$ and $\varphi=\varphi_0$ to simplify notation. 
 
From \cite{JJDG} it follows that $\dot \f$ and $\dot \psi $ are related as follows 
$$
\dot \f=3f^0\f+*_{\f}(f^1\wedge \f)+f^3\,,\quad \dot \psi=4f^0\psi+f^1\wedge\f-*_{\f}f^3\,,
$$
where $f^0$ is a smooth function, $f^1\in \Omega^1(M)$ and $f^3\in \Omega^3_{27}(M)$.
A direct computation via the formulas in \cite{BryantXu} yields 
$$
\partial_{t|t=0}\Delta_{\psi_t}\psi_t=dp(\dot\psi)\,,\quad \partial_{t|t=0}\mathcal{L}_{V(*_{\psi}\psi)}\psi=dq(\dot\psi)
$$
where 
$$
\begin{aligned}
p(\dot \psi)&=3*_{\f}d(f^0\wedge \f)+*_{\f}df^3+*_\f d*_\f(f^1\wedge \varphi)+{\rm l.o.t.}\,,\\
q(\dot \psi)&=5 *_\f(df^0\wedge \f)+*_\f(d^{27}_7 f^3\wedge \f)+{\rm l.o.t.}\,,
\end{aligned}
$$
and by \lq\lq${\rm l.o.t.}$\rq\rq\, we mean \lq\lq lower order terms\rq\rq. 

Moreover if we denote by $\pi_7(d\dot \psi)$ the component of $d\dot \psi$ in $*_{\f}\Omega^2_{7}(M)=\{\alpha \wedge \psi\,\,:\,\,\alpha\in \Omega^1(M)\}$ we have 
$$
\pi_7(d\dot \psi)=4df^0\wedge \psi+\frac13d^{27}_7f^3\wedge \psi+\frac23 d^7_7f^1\wedge \psi+{\rm l.o.t.}
$$
and from $d\dot\psi=0$ we deduce  
$$
d^{27}_7f^3=-12 df^0-2d_7^7f^1+{\rm l.o.t.}
$$
which implies 
$$
q(\dot \psi)=-7 *_{\f}(df^0\wedge \f)-2*_\f(d^{7}_7 f^1\wedge \f)+{\rm l.o.t.}
$$
Therefore 
$$
p(\dot \psi)+q(\dot \psi)+*_\f d*_\f \dot \psi=2*_\f d*_\f (f^1\wedge \f)-2*_\f(d^{7}_7 f^1\wedge \f)+{\rm l.o.t.}
$$
Now 
$$
*_\f d*_\f (f^1\wedge \f)=\frac47 d^7_1f^1\f+\frac12 *_\f(d_7^7f^1\wedge \f)+ d_{27}^7f^1
$$
and so 
$$
\begin{aligned}
p(\dot \psi)+q(\dot \psi)+*_\f d*_\f \dot \psi&\,=2*_\f d*_\f (f^1\wedge \f)-2*_\f(d^{7}_7 f^1\wedge \f)+{\rm l.o.t.}\\
&\,= \frac87 d^7_1f^1\f+*_\f(d_7^7f^1\wedge \f)+2d_{27}^7f^1-2*_\f(d^{7}_7 f^1\wedge \f)+{\rm l.o.t.}\\
&\,= \frac87 d^7_1f^1\f-*_\f(d_7^7f^1\wedge \f)+2d_{27}^7f^1+{\rm l.o.t.}
\end{aligned}
$$
From
$$
d(*_\f(f^1\wedge \psi))=-\frac37 d^7_1f^1\f-\frac12 *_\f (d_7^7f^1\wedge \f)+d_{27}^7f^1
$$
we deduce 
$$
p(\dot \psi)+q(\dot \psi)+*_\f d*_\f \dot \psi=2d(*_\f(f^1\wedge \psi))+2 d^7_1f^1\f+{\rm l.o.t.}
$$
Finally 
$$
{\rm Tr}\, T_{\psi}=\frac14 *_\f (d\f\wedge\f)=d_1^7f^1+{\rm l.o.t.}
$$
and consequently  
$$
p(\dot \psi)+q(\dot \psi)+2(A-{\rm Tr}\, T_{\psi})\f+*_\f d*_\f \dot \psi=2d(*_\f(f^1\wedge \psi))+2A\f+{\rm l.o.t.}
$$
Therefore 
$$
\partial_{t|t=0}Q(\psi_t)=dp(\dot \psi)+dq(\dot \psi)+2d(A-{\rm Tr}\, T_{\psi})=-\Delta_{\psi}\dot \psi+2Ad\dot \f+{\rm l.o.t.}
$$
and the claim follows. 
\end{proof}


Now we note that torsion-free $\rm G_2$-structures are critical points of the functional $Q$ regardless of the value of $A$.
We concentrate on the case $A=0$. 

The main result of this section is the following  
\begin{theorem}\label{coflow}
Let $\bar \psi\in C^\infty(M,\Lambda^4_+)$ be a torsion-free ${\rm G}_2$-structure on a compact 7-manifold $M$. 
There exists $\delta>0$ such that if $\psi_0 \in C^\infty(M,\Lambda^4_+)$ is closed and satisfies 
$$
\|\psi_0-\bar\psi\|_{C^{\infty}}<\delta\,,\quad [\psi_0]=[\bar \psi]\,,
$$ 
then the evolution equation 
\begin{equation}\label{A=0}
\partial_t\psi_t=\Delta_{\psi_t}\psi_t-2d(({\rm tr}\, T_t)*_{\psi_t}\psi_t)\,,\quad \psi_{|t=0}=\psi_0\,,
\end{equation}
has a unique long-time solution $\{\psi_t\}_{t\in [0,\infty)}$ which converges in $C^{\infty}$-topology to $\psi_\infty	\in {\rm Diff}^0\cdot \bar{\psi}$ as $t\to \infty$. 
\end{theorem}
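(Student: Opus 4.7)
My plan is to mimic the reduction carried out earlier in section \ref{coflow} for the Laplacian-DeTurck flow: recast the DeTurck-modified version of \eqref{A=0} in the setup of theorem \ref{teoremone}, apply that theorem to obtain long-time existence and exponential convergence of the DeTurck-modified flow, and then recover \eqref{A=0} by pulling back along the one-parameter family of diffeomorphisms generated by $-V(*_{\tilde\psi_t}\tilde\psi_t)$. Accordingly I would set $E_-=\Lambda^3 M$, $E=\Lambda^4 M$, $E_+=\Lambda^5 M$, $\mathcal{E}=\Lambda^4_+$, $D=d\colon\Omega^3(M)\to\Omega^4(M)$, $D_+=d\colon\Omega^4(M)\to\Omega^5(M)$, and take $\Delta_D$ to be the Hodge Laplacian on $\Omega^4(M)$ of the metric induced by $\bar\psi$; thus $\Phi$ is the space of closed ${\rm G}_2$-$4$-forms. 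The operator $Q$ from lemma \ref{lemmaG} with $A=0$,
$$
Q(\psi)=\Delta_\psi\psi-2d\bigl(({\rm tr}\,T_\psi)*_\psi\psi\bigr)+\mathcal{L}_{V(*_\psi\psi)}\psi,
$$
manifestly sends $\Phi$ into ${\rm Im}\,d$ since each summand is exact on closed ${\rm G}_2$-$4$-forms.

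Lemma \ref{lemmaG} with $A=0$ then supplies the families of strongly elliptic operators required by conditions 2.~and 3.: setting $L_\psi=-\Delta_\psi+d\Psi$ one has $Q_{*|\psi}=L_\psi$ on $d\Omega^3(M)$; since $\Psi$ is algebraic and $d^2=0$, one computes $L_\psi(d\theta)=d\bigl(-\Delta_\psi\theta+\Psi(d\theta)\bigr)$, so $l_\psi:=-\Delta_\psi+\Psi\circ d$ is a smooth family of strongly elliptic operators on $\Omega^3(M)$ verifying $Q_{*|\psi}(d\theta)=d\,l_\psi(\theta)$. At the torsion-free $\bar\psi$ the torsion, ${\rm tr}\,T_{\bar\psi}$, $V(*_{\bar\psi}\bar\psi)$ and the coefficients of $\Psi$ all vanish, hence $Q(\bar\psi)=0$ and $L_{\bar\psi}=-\Delta_{\bar\psi}$: the latter is self-adjoint, non-positive, and has kernel disjoint from $d\Omega^3(M)$ by Hodge theory, so its restriction to $DC^\infty(M,E_-)$ is negative definite. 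Theorem \ref{teoremone} then yields $\delta>0$ such that whenever $\|\psi_0-\bar\psi\|_{C^\infty}<\delta$ and $[\psi_0]=[\bar\psi]\in H^4(M,\R)$, the DeTurck flow $\partial_t\tilde\psi_t=Q(\tilde\psi_t)$, $\tilde\psi_0=\psi_0$, has a long-time $C^\infty$-solution staying close to $\bar\psi$, with $\|Q(\tilde\psi_t)\|_{C^\infty}$ decaying exponentially and $\tilde\psi_t\to\tilde\psi_\infty$ in $C^\infty$ for some $\tilde\psi_\infty$ with $Q(\tilde\psi_\infty)=0$ and $[\tilde\psi_\infty]=[\bar\psi]$. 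Defining $\phi_t$ by $\partial_t\phi_t=-V(*_{\tilde\psi_t}\tilde\psi_t)_{|\phi_t}$, $\phi_0={\rm Id}_M$, as in section \ref{coflow}, the uniform $C^\infty$-bounds on $\tilde\psi_t$ force $\phi_t$ to exist for all $t\geq 0$ and to converge in $C^\infty$ to some $\phi_\infty\in{\rm Diff}^0(M)$, and the pullback $\psi_t:=\phi_t^*\tilde\psi_t$ is the sought long-time solution of \eqref{A=0} with limit $\phi_\infty^*\tilde\psi_\infty$.

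The principal obstacle is to prove that $\tilde\psi_\infty=\bar\psi$, so that $\psi_\infty=\phi_\infty^*\bar\psi\in{\rm Diff}^0\cdot\bar\psi$; a priori $Q(\tilde\psi_\infty)=0$ together with $[\tilde\psi_\infty]=[\bar\psi]$ only confines $\tilde\psi_\infty$ to a possibly nontrivial moduli of fixed points of $Q$ near $\bar\psi$. To rule this out I would transplant lemma 4.2 of \cite{L2} to the coflow setting: using the cohomology constraint to write $\tilde\psi_t-\bar\psi=d\eta_t$ with a Hodge-gauge choice of $\eta_t$, combining the exponential $C^\infty$-decay of $Q(\tilde\psi_t)$ granted by theorem \ref{teoremone} with the linear stability of $L_{\bar\psi}=-\Delta_{\bar\psi}$ on $d\Omega^3(M)$, and estimating $\frac{d}{dt}\|\tilde\psi_t-\bar\psi\|_{L^2}^2$, I expect to obtain exponential $L^2$-decay of $\tilde\psi_t-\bar\psi$ for $\delta$ small enough; standard interpolation with the $C^\infty$-bounds then forces $\tilde\psi_\infty=\bar\psi$ and concludes the proof.
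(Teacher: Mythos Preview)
Your overall strategy matches the paper's three steps: apply theorem \ref{teoremone} to the DeTurck-modified coflow, show $\tilde\psi_\infty=\bar\psi$ via an $L^2$-decay estimate for $\tilde\psi_t-\bar\psi$ in the spirit of \cite[Lemma 4.2]{L2}, and then undo the gauge by the diffeomorphisms $\phi_t$. The paper carries out the middle step explicitly by writing $Q(\tilde\psi_t)=-\Delta_{\bar\psi}\theta_t+dF(\theta_t)$ with a pointwise bound $|F(\theta_t)|\leq C|\theta_t|\,|\bar\nabla\theta_t|$ and applying Gronwall; your sketch of this is adequate.

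There is, however, a genuine slip in your ordering. You assert that ``the uniform $C^\infty$-bounds on $\tilde\psi_t$ force $\phi_t$ \ldots\ to converge in $C^\infty$ to some $\phi_\infty\in{\rm Diff}^0(M)$'', and you place this \emph{before} proving $\tilde\psi_\infty=\bar\psi$. Uniform bounds on $\tilde\psi_t$ only bound the generating field $V_t:=V(*_{\tilde\psi_t}\tilde\psi_t)$; that yields long-time existence of $\phi_t$ but not convergence (a constant nonzero $V$ already generates a non-convergent flow). Convergence needs $\int_0^\infty\|V_t\|_{C^k}\,dt<\infty$, i.e.\ $V_t\to 0$, and this is not automatic: theorem \ref{teoremone} only gives $\tilde\psi_t\to\tilde\psi_\infty$ with $Q(\tilde\psi_\infty)=0$, and $V(*_{\tilde\psi_\infty}\tilde\psi_\infty)$ need not vanish for an arbitrary zero of $Q$. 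It does vanish once you know $\tilde\psi_\infty=\bar\psi$ (the background connection being that of $\bar g$), so your ``principal obstacle'' must be resolved first. The paper orders the steps this way, and even then does \emph{not} obtain the $C^\infty$-convergence of $\psi_t=\phi_t^*\tilde\psi_t$ via $C^\infty$-convergence of $\phi_t$ as you propose: it only shows that $\phi_\infty$ is a diffeomorphism and that $\psi_t\to\phi_\infty^*\bar\psi$ in $C^0$, and then upgrades to $C^\infty$ by invoking the Shi-type estimates of \cite{shi-type} and the compactness theorem \cite[Theorem 7.1]{L1}. With the ordering fixed, your direct route through $C^\infty$-convergence of $\phi_t$ is legitimate and more self-contained; the paper's route is parallel to \cite{L2} and makes contact with the general Shi-type machinery for ``reasonable'' ${\rm G}_2$-flows.
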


%
%
%


In the statement above and in the following proof the $C^k$-norms and the $L^2$-norm, where not specified, are meant with respect to the metric $\bar g$ induced by $\bar \psi$.

\begin{proof}
Our approach mixes the use of theorem \ref{teoremone} with some techniques used in \cite{L2} to prove the stability of the Laplacian flow. 
We first apply theorem \ref{teoremone} to show the stability of the gauge fixing of the flow and then
use Shi-type estimates in \cite{shi-type} to recover the stability of the original flow. The proof is subdivided in the following three steps: 
\begin{enumerate}
\item[1.] We  prove that for $\delta$ small enough a gauge fixing to \eqref{A=0}
has a long-time solution $\tilde\psi_t$ which converges in $C^{\infty}$ topology to a torsion-free ${\rm G}_2$-structure $\tilde \psi_{\infty}\in [\bar\psi]$ and stays $C^\infty$-close to $\bar \psi$.

\vspace{0.1cm}
\item[2.] We show that for a suitable choice of $\delta$,  $\tilde\psi_t$ converges in $L^2$-norm to  $\bar\psi$, which implies that $\tilde\psi_{\infty}=\bar\psi$;

\vspace{0.1cm}
\item[3.] We recover the stability of the original flow. 
\end{enumerate}

The proof of steps $2$ and $3$ are close to the case of the Laplacian flow. 

\medskip 
\noindent 
{\em Step $1.$} If we choose as background metric the metric $\bar g$ induced by the torsion free ${\rm G}_2$-structure $\bar \psi$, then lemma \ref{lemmaG} together with theorem \ref{teoremone} implies that for every $\epsilon>0$ there exists $\delta >0$ and $\kappa>0$ such that if $\psi_0 \in C^{\infty}(M,\Lambda^4_+)$ is closed and satisfies 
$$
\|\psi_0-\bar\psi\|_{C^{\infty}}<\delta\,,\quad [\psi_0]=[\bar \psi]\,,
$$ 
then the evolution problem 
\begin{equation}\label{gauge fixing A=0}
\partial_t \tilde \psi_t=\Delta_{\tilde\psi_t}\tilde\psi_t-2d(({\rm tr}\, \tilde T_t)*_{\tilde\psi_t} \tilde\psi_t)+\mathcal{L}_{V(\tilde\psi_t)}\tilde\psi_t\,,\quad \tilde\psi_{|t=0}=\psi_0
\end{equation}
has a long-time solution $\{\tilde \psi_t\}_{t\in [0,\infty)}$ such that 
$$
\|\tilde\psi_t-\bar\psi\|_{C^{\infty}}<	\epsilon\,, \quad \mbox{for every } t \in [0,+\infty)
$$
and 
\begin{equation}\label{exp}
\|\Delta_{\tilde \psi_t}\tilde \psi_t-2d(({\rm tr}\, \tilde T_t)*_{\tilde \psi_t}\tilde \psi_t)+\mathcal{L}_{V(\tilde\psi_t)}\tilde\psi_t\|_{C^{\infty}} \leq \kappa {\rm e}^{-\lambda t}
\end{equation}
for every $t\in[0,\infty)$, where $\lambda$ is half the first positive eigenvalue of $\Delta_{\bar \psi}$. Furthermore, $\tilde \psi_t$ converges exponentially fast to a 
torsion-free ${\rm G}_2$-structure 
$\tilde\psi_{\infty}\in[\bar\psi]$. 

\medskip 
\noindent 
{\em Step $2.$}
We show that if we choose $\epsilon$ small enough in the previous step, then the $L^2$-norm 
of $\theta_t=\tilde\psi_t-\bar\psi$ decays exponentially. Let $Q\colon C^{\infty}(M,\Lambda^4_+)\to \Omega^{4}(M)$ be defined as 
$$
Q(\psi)=\Delta_{\psi}\psi-2d(({\rm tr}\, T_{\psi})*_\psi \psi)+\mathcal{L}_{V(*_{\psi}\psi)}\psi\,.
$$

Since $Q$ sends closed forms to exact forms we can write
\begin{equation}
\label{ev_estimate}
\partial_t\theta_t=Q(\tilde \psi_t)=Q_{*|\bar\psi}(\theta_t)+dF(\tilde\psi_t)=-\Delta_{\bar\psi} \tilde\psi_t+dF(\theta_t)\,,
\end{equation}
where we used also lemma \ref{lemmaG}.
Next we consider $\Theta(\psi)=*_{\psi}\psi$ and write 
$$
\Theta(\tilde \psi_t)=*_{\bar \psi} \bar \psi +S_1(\theta_t)+S_2(\theta_t)
$$
where $S_1(\theta_t)=\Theta_{*|\bar\psi}(\theta_t)$.  
Arguing as in \cite{JJDG} 
we can observe that both $dS_1(\theta_t)$ and $dS_2(\theta_t)$
are dominated by $\theta_t\bar\nabla \theta_t$ for $\|\theta_t\|_{C^\infty_{\bar g}}$ small,
where $\bar \nabla$ is the Levi-Civita connection of $\bar g$. 
Note also that $dS_1(\theta_t)$ is linear in $\bar \nabla \theta_t$. \\

Let $Q^1(\psi):=\Delta_{\psi}\psi$. Then 
$$
Q^1_{*|\bar\psi}(\theta_t)=d*_{\bar \psi}d S_1(\theta_t)
$$
and 
$$
\begin{aligned}
Q^1(\tilde\psi_t)-Q^1(\bar\psi)-Q^1_{*|\bar\psi}(\theta_t)=&\,d*_{\tilde\psi_t}d *_{\tilde \psi_t} \tilde \psi_t - d*_{\bar \psi}d S_1(\theta_t)\\
=&\,d*_{\tilde\psi_t}d *_{\tilde \psi_t} {\tilde \psi_t} - d*_{\bar\psi}d *_{\tilde \psi_t}  {\tilde \psi_t} + d*_{\bar\psi}d *_{\tilde \psi_t} {\tilde \psi_t} - d*_{\bar \psi}d S_1(\theta_t)\\
=&\,d(*_{\tilde\psi_t}-*_{\bar\psi})d *_{\tilde \psi_t} {\tilde \psi_t} + d*_{\bar\psi}d *_{\tilde \psi_t} {\tilde \psi_t} - d*_{\bar \psi}d S_1(\theta_t)\\
=&\,d(*_{\tilde\psi_t}-*_{\bar\psi})d *_{\tilde \psi_t} {\tilde \psi_t} + d*_{\bar\psi}d S_2(\theta_t)\,.\\
\end{aligned}
$$
Thus we can write $\Delta_{\tilde \psi_t}\tilde \psi_t = Q^1_{*|\bar\psi}(\theta_t) + dF^1(\theta_t)$ with $F^1(\theta_t)$ 
dominated by $\theta_t\bar\nabla \theta_t$, for $\|\theta_t\|_{C^\infty_{\bar g}}$ small, since $(*_{\tilde\psi_t}-*_{\bar \psi})$ is a $0^{th}$-order operator  depending  on $\theta_t$ polynomially. 

Now let 
$q^2(\psi)=(*_{\psi}(d^*_\psi \psi\wedge\psi))*_\psi\psi$ and
$Q^2(\psi)=dq^2(\psi)$, then
$$
q^2_{*|\bar\psi}(\theta_t)=(*_{\bar \psi}(*_{\bar \psi} dS_1(\theta_t)\wedge\bar \psi))*_{\bar \psi}\bar\psi\,.
$$
Moreover
$$
\begin{aligned}
&\,q^2(\tilde\psi_t)-q^2(\bar\psi)-q^2_{*|\bar\psi}(\theta_t)=
\,(*_{\tilde\psi_t}(d^*_{\tilde\psi_t} \tilde\psi_t\wedge\tilde\psi_t))*_{\tilde\psi_t}\tilde\psi_t
-(*_{\bar \psi}(*_{\bar \psi} dS_1(\theta_t)\wedge\bar \psi))*_{\bar\psi}\bar\psi\,\\
& = \,((*_{\tilde\psi_t}-*_{\bar\psi})(d^*_{\tilde\psi_t} \tilde\psi_t\wedge(\tilde\psi_t-\bar\psi)))(*_{\tilde\psi_t}\tilde\psi_t-*_{\bar\psi}\bar\psi)
+(*_{\tilde\psi_t}(d^*_{\tilde\psi_t} \tilde\psi_t\wedge(\tilde\psi_t-\bar\psi)))*_{\bar\psi}\bar\psi \\
& +(*_{\tilde\psi_t}(d^*_{\tilde\psi_t} \tilde\psi_t\wedge\bar\psi))*_{\bar\psi}\bar\psi 
 +((*_{\tilde\psi_t}-*_{\bar\psi})(d^*_{\tilde\psi_t} \tilde\psi_t\wedge \bar\psi))(*_{\tilde\psi_t}\tilde\psi_t-*_{\bar\psi}\bar\psi) 
+(*_{\bar\psi}(d^*_{\tilde\psi_t} \tilde\psi_t\wedge \bar\psi))(*_{\tilde\psi_t}\tilde\psi_t-*_{\bar\psi}\bar\psi) \\
& +(*_{\bar\psi}(d^*_{\tilde\psi_t} \tilde\psi_t\wedge(\tilde\psi_t-\bar\psi)))(*_{\tilde\psi_t}\tilde\psi_t-*_{\bar\psi}\bar\psi) 
 -(*_{\bar \psi}(*_{\bar \psi} dS_1(\theta_t)\wedge\bar \psi))*_{\bar\psi}\bar\psi\,,\\
\end{aligned}
$$
and consequently we have that $Q^2(\tilde \psi_t) = Q^2_{*|\bar\psi}(\theta_t) + dF^2(\theta_t)$, with $F^2(\theta_t)$ dominated by 
$\theta_t\bar \nabla\theta_t$, for $\|\theta_t\|_{C^\infty_{\bar g}}$ small. 

Finally, if $Q^3(\psi)=\mathcal{L}_{V(*_{\psi}\psi)}\psi$, then \cite[formula (4.13)]{L2} (once regarded on $4$-forms) yields 
$$
Q^3(\tilde\psi_t) = Q^3_{*|\bar\psi}(\theta_t) + dF^3(\theta_t)\,,
$$
with $F^3(\theta_t)$ again dominated by $\theta_t\bar \nabla\theta_t$, for $\|\theta_t\|_{C^\infty_{\bar g}}$ small. 
Therefore in \eqref{ev_estimate} we can put $F(\theta_t)=F^1(\theta_t)-\frac{7}{2}F^2(\theta_t)+F^3(\theta_t)$ 
and we have that there exists a constant $C' >0$ such that the pointwise estimate $|F(\theta_t)|_{\bar g} \leq C' |\theta_t|_{\bar g} |\bar \nabla\theta_t|_{\bar g}$ holds for $\|\theta_t\|_{C^\infty_{\bar g}}$ small. Now 
$$
\begin{aligned}
\frac{d}{dt}\|\theta_t\|^2_{L^2}=&\,\frac{d}{dt}\int_M|\theta_t|_{\bar g}^2\,{\rm Vol}_{\bar g}
=2\int_M\bar g(\theta_t,\partial_t\theta_t)\,{\rm Vol}_{\bar g}
=2\int_M \bar g(\theta_t,-\Delta_{\bar{\psi}}\theta_t+dF(\theta_t))\,{\rm Vol}_{\bar g}\\
=&\,-2\|d^*\theta_t\|^2_{L^2}+2\int_M\bar g(d^*\theta_t,F(\theta_t))\,{\rm Vol}_{\bar g}
\leq-2\|d^*\theta_t\|^2_{L^2}+2\int_M|d^*\theta_t|_{\bar g} |F(\theta_t)|_{\bar g}\,{\rm Vol}_{\bar g}\\
\leq&	\,-2\|d^*\theta_t\|^2_{L^2}+2C'\int_M|d^*\theta_t|_{\bar g} |\theta_t|_{\bar g}|\bar \nabla\theta_t|_{\bar g}\,{\rm Vol}_{\bar g}
\leq -2\|d^*\theta_t\|^2_{L^2}+2C'\epsilon \int_M|d^*\theta_t|_{\bar g} |\bar \nabla\theta_t|_{\bar g}\,{\rm Vol}_{\bar g}	\\
\leq&	\,-2\|d^*\theta_t\|^2_{L^2}+C'\epsilon (\|d^*\theta_t\|^2_{L^2}+\|\bar \nabla\theta_t\|^2_{L^2})\,.
\end{aligned}
$$ 
Weitzenb\"ock formula yields that there exists a constant $C''>0$ depending only on bounds of the curvature of $\bar g$ such that
$$
\|\bar\nabla\theta_t\|^2_{L^2}\leq \|d^*\theta_t\|^2_{L^2}+C''\|\theta_t\|^2_{L^2}\,.
$$ 
Since
$$
\|d^*\theta_t\|^2_{L^2}\geq 2 \lambda\|\theta_t\|^2_{L^2}
$$
we get 
$$
\frac{d}{dt}\|\theta_t\|^2_{L^2} \leq 4\lambda(C'\epsilon-1)\|\theta_t\|^2_{L^2}+C'C''\epsilon \|\theta_t\|^2_{L^2}=(-4\lambda+C'''\epsilon)\|\theta_t\|^2_{L^2}\,,
$$
with $C'''=C'(4\lambda +C'')$.
Therefore for $\epsilon$ small enough, Gronwall lemma implies  that $\|\tilde \psi_t-\bar\psi\|_{L^2}$ decays exponentially.

\medskip 
\noindent {\em Step $3$.}
We recover from $\tilde \psi_t$ a long-time solution  $\psi_t$ to \eqref{A=0} and we show that $\psi_t$ converges 
 exponentially fast to a torsion-free ${\rm G}_2$-structure in $C^{\infty}$-topology.  Let 
$\{\phi_t\}$ be the family of diffeomorphisms solving 
$$ 
\partial_t\phi_t=-V(*_{\tilde\psi_t}\tilde\psi_t)_{|\phi_t}\,,\quad \phi_{|t=0}={\rm Id}\,,
$$
and correspondingly let us set
$$
\psi_t = \phi_t^* \tilde \psi_t\,.
$$
From the convergence of $\tilde \psi_t$ in $C^{\infty}_{\bar g}$ topology it follows that $\phi_t$ converges to a  limit map $\phi_\infty$. 
Arguing as in \cite{L2} we get that $\phi_{\infty}$ is in fact a diffeomorphism. Indeed, if $X$  is a vector field on $M$,
we have 
$$
\frac12 \d_t |\phi_{t*}(X)|_{\bar g}^2=\bar g\left( \d_t\ \phi_{t*}(X), \phi_{t*}(X)\right)\geq -\left| \d_t \phi_{t*}(X)\right|_{\bar g}\, 
\left|\phi_{t*}(X)\right|_{\bar g}\geq -\,\|V(*_{\tilde\psi_t}\tilde\psi_t )\|_{C^{1}}\left|\phi_{t*}(X)\right|^2_{\bar g}\,.
$$ 
Hence
$$
\d_t \log\,|\phi_{t*}(X)|_{\bar g}\geq -\,\|V(*_{\tilde\psi_t}\tilde\psi_t )\|_{C^{1}}
$$
and integrating we deduce 
$$
|\phi_{t*}(X)|_{\bar g}\geq |X|_{\bar g}\, {\rm e}^{- \,\int_{0}^t\|V(*_{\tilde\psi_s}\tilde\psi_s )\|_{C^{1}}\,ds}\,.
$$

Since $\psi_t$ converges exponentially to $\bar \psi$ in $C^{\infty}$ topology, 
we have that $\|V(*_{\tilde\psi_t}\tilde\psi_t )\|_{C^{1}}$ decays  exponentially so that 
\begin{equation}\label{<>}
|\phi_{t*}(X)|_{\bar g}\geq C\,|X|_{\bar g}\,,
\end{equation}
where $C$  is a positive  constant which does not depend on $X$ and $t$.
This last inequality holds true for $\phi_{\infty}$ and it follows that $\phi_\infty$ is a local diffeomorphism homotopic to the identity and hence a diffeomorphism. 
Since $\tilde\psi_t$ stays close to $\bar \psi$ in  $C^\infty$-topology, up to choosing a smaller $\epsilon$, \eqref{exp} yields
$$
\|\Delta_{\tilde \psi_t}\tilde \psi_t-2d(({\rm tr}\, \tilde T_t)*_{\tilde \psi_t}\tilde \psi_t))+\mathcal{L}_{V(\tilde\psi_t)}\tilde\psi_t\|_{C^{\infty}_{\tilde g_t}}\leq 2\|\Delta_{\tilde \psi_t}\tilde \psi_t-2d(({\rm tr}\, \tilde T_t)
*_{\tilde \psi_t}\tilde \psi_t))+\mathcal{L}_{V(\tilde\psi_t)}\tilde\psi_t\|_{C^{\infty}_{\bar g}}\leq 2\kappa {\rm e}^{-\lambda t}
$$
where $\tilde g_t$ is the metric induced by $\tilde \psi_t$.
By  diffeomorphism invariance it follows that 
\begin{equation}
\label{stima}
\|\partial_t \psi_t\|_{C^{\infty}_{g_t}}=\|\Delta_{\psi_t}\psi_t-2d(({\rm tr}\, T_t)*_{\psi_t}\psi_t))\|_{C^{\infty}_{g_t}} \leq 2\kappa {\rm e}^{-\lambda t}
\end{equation}
where $g_t$ is the metric induced by $\psi_t$.

Now if we write 
$$
\partial_t\psi_t=\alpha_t\wedge*\psi_t+3*_{\psi_t}\mathsf{i}_{\psi_t}(h_t)
$$ 
we have in particular
$$
\|\partial_t\psi_t\|_{C^{0}_{g_t}}^2=\|\alpha_t\wedge*_{\psi_t}\psi_t\|_{C^{0}_{g_t}}^2+\|3*_{\psi_t}\mathsf{i}_{\psi_t}(h_t)\|_{C^{0}_{g_t}}^2\,.
$$

Thus \eqref{stima} implies $\|h_t\|_{C^{0}_{g_t}}^2\leq  C\kappa {\rm e}^{-\lambda t}\,,$ where $C$ is a positive universal constant.
Moreover by \cite[Proposition 3.1]{Gri}
$$
\partial_tg_t=\frac12 ({\rm  tr}_{g_t}\, h_t)\,g_t-2h_t\,,
$$ 
so that for a vector field $X\neq 0$ on $M$ we have 
$$
|\partial_tg_t(X,X)|\leq \left(\tfrac12|{\rm  tr}_{g_t}\, h_t|+2\|h_t\|_{C^0_{g_t}}\right) \,g_t(X,X)
\leq  C\|h_t\|_{C^0_{g_t}}\,g_t(X,X) 
$$
for some constant $C>0$ and  integrating $\frac{\partial_tg_t(X,X)}{g_t(X,X)}$ we deduce
$$
{\rm e}^{-\frac{C}{\lambda}}g_0\leq g_t\leq {\rm e}^{\frac{C}{\lambda}}g_0
$$
so that the metrics $g_t$ and $g_0$ are uniformly equivalent. It follows that  $g_t$ is uniformly equivalent to $\bar g$ and so 
$\|\partial_t\psi_t\|_{C^0_{\bar g}}\leq C {\rm e}^{-{\lambda t}}$ for some constant $C>0$. 
Thus $\psi_t$ converges in $C^0_{\bar g}$-norm to some $4$-form $\psi_\infty$. 
On the other hand, by \eqref{<>} we have
$$
\begin{aligned}
|\psi_{\infty}-\phi_{\infty}^*\bar \psi|_{\bar g} & \leq\lim_{t\to \infty}\left( |\psi_{\infty}-\psi_t|_{\bar g}+|\psi_{t}-\phi_t^*\bar \psi|_{\bar g}+|\phi_t^*\bar \psi-\phi_{\infty}^*\bar \psi |_{\bar g}\right)\\
 & \leq\lim_{t\to \infty}\left( |\psi_{\infty}-\psi_t|_{\bar g}+C|\tilde\psi_{t}-\bar \psi|_{\bar g}+|(\phi_t^*-\phi_\infty^*)\bar\psi|_{\bar g}\right)=0\,,
\end{aligned}
$$
so that 
$\psi_\infty =  \phi_{\infty}^*\bar \psi$.

The last part consists in showing that $\psi_t$ converges to $\psi_{\infty}$ in $C^{\infty}$-topology. 
We just describe the procedure and refer to \cite{L2} for details. 
First we have exponential estimates for the $\bar g$-norm of the curvature $\tilde R_t$ of $\tilde g_t$ and the first covariant derivatives of the torsion $\tilde T_t$ of $\tilde \psi_t$. Then for $t$ large enough we deduce corresponding estimates with respect to the $g_t$-norms (since $g_t$ is uniformly equivalent to $\bar g$) and finally by diffeomorphism invariance we have uniform bounds for the $g_t$-norm of the curvature $R_t$ of $g_t$ and the covariant derivative of the torsion $T_t$ of $\psi_t$.
This allows us to use the a-priori Shi-type estimates for the Laplacian co-flow \cite[Theorem 2.1]{shi-type} (Note that the flow \eqref{A=0} is a {\em reasonable} flow of ${\rm G}_2$-structures in the sense of \cite{shi-type}).
Now the lower bound on the injectivity radius of $g_t$ (again by uniform equivalence) and the compactness theorem for $ {\rm G}_2$-structures \cite[Theorem 7.1]{L1} gives us the convergence of $\psi_t$ to $\psi_\infty$ in $C^{\infty}$-topology. 
\end{proof}

\subsection{Examples and Remarks}
It is known that in the case $A>0$, the modified Laplacian coflow may have some stationary points which are not torsion-free. Here we observe that 
such stationary points are not stable in general.  A class of examples is provided by {\em nearly parallel } ${\rm G_2}$-structures
which are characterized by the equations
\begin{equation}\label{NP}
d\psi=0 \quad \mbox{and} \quad d(*_\psi\psi)=\tau_0\psi\,,
\end{equation}
where $\tau_0$ is a constant.
For instance the standard ${\rm G}_2$-structure on the $7$-sphere ${\rm Spin}(7)/{\rm G}_2$ is nearly parallel. 
Let us study the evolution of a nearly parallel ${\rm G}_2$-structure $\bar\psi $  by equation \eqref{modified coflow} with $A \geq 0$.
For $\psi$ nearly parallel one has  
$$
\Delta_{\psi}\psi+2d((A-{\rm tr}\, T_{\psi})*_\psi \psi)=
\tau_0
\left(2A-\tfrac{5}{2}\tau_0\right)\psi
$$
It is immediate to note that if the torsion form $\tau_0$ is $\frac45 A$ then $\bar\psi $ is stationary for \eqref{modified coflow} (see also \cite{Lotaysurvey}).
In general the modified Laplacian coflow starting from a nearly parallel ${\rm G}_2$-structure $\psi_0$
acts by rescaling $\psi_t=c_t\psi_0$, where $c_t$ solves the ODE 
\begin{equation}\label{nearly}
\tfrac{d}{dt}c_t=c_t^{3/4}\tau_0\left(2A-\tfrac{5}{2}c_t^{-1/4}\tau_0\right)\,. 
\end{equation}
where $\tau_0$ is the torsion form of $\psi_0$ defined by \eqref{NP}.
Now consider $A > 0$ fixed and take $\bar\psi$ nearly parallel and stationary.
Take $\psi_0 = \mu \bar\psi$ with $\mu$ a real constant.
Now if $\mu > 1$ we have that $c_t$ is increasing, while if $\mu<1$ we have that $c_t$ is decreasing.
In both cases the flow steps away from the stationary solution and $\bar \psi$ is unstable. \\
We can use the same computation to illustrate another phenomenon.  
Grigorian noted in \cite{Gri} that the volume is increasing along the modified Laplacian coflow \eqref{modified coflow} if and only if the following inequality
is satisfied for every $t$
$$
|T_t|^2+{\rm tr}\, T_t (4A-3 {\rm tr}\,T_t)>0\,. 
$$
In the case $A=0$, the volume may decrease. 
Indeed in the situation above we have 
$$
\tfrac{d}{dt}c_t=-\tfrac{5}{2}c_t^{1/2}\tau_0^2\,,
$$
i.e.
$$
c_t=(1-\tfrac{5}{4}\tau_0^2\,t)^2\,.
$$
Since 
$$
{\rm Vol}_{\psi_t}=c_t^{7/4} {\rm Vol}_{\psi_0}
$$
the volume decreases.

\medskip
Next we focus on examples of static solutions to the modified Laplacian coflow which are not torsion free. To construct such examples we consider nilpotent Lie groups and we work in their Lie algebras in an algebraic fashion.
\begin{ex}\label{EE1}{\em 
Let $M=\mathbb T^{4}\times \mathbb{H}^3/\Gamma$, where $\mathbb T^{4}$ is the $4$-dimensional torus, $\mathbb H^3$ is the $3$-dimensional Heisenberg Lie group 
$$
\mathbb{H}^3=\left\{\left[\begin{smallmatrix}1&x&z\\0&1&y\\0&0&1\end{smallmatrix}\right]\mid x,y,z, \in \mathbb R\right\}
$$ 
and $\Gamma$ is the co-compact lattice of matrices in $\mathbb{H}^3 $ with integral entries. Notice that $M$ can be regarded as the product of the Kodaira-Thurston 
manifold with the $3$-dimensional  torus and it is a $2$-step nilmanifold admitting a global coframe  $\{e^1,\dots, e^7\}$
which satisfies 
$$ 
de^i=0, \quad i=1,2,3,4,5,7 \quad \quad de^6=e^1 \wedge e^7. 
$$ 
Let 
$$
\bar\varphi= e^{123}+e^{145}+e^{167}+e^{246}-e^{257}-e^{347}-e^{356}
$$
be the \lq\lq standard\rq\rq ${\rm G}_2$-structure with respect to the co-frame we have fixed. 
(As usual we denote by $e^{ijk\ldots}$ the form 
$e^i\wedge e^j\wedge e^k\wedge \ldots$). An easy computation implies that $\bar\varphi$ is co-closed and that 
$$
\bar \psi=*_{\bar\varphi}\bar\varphi=e^{4567}+e^{2367}+e^{2345}+e^{1357}-e^{1346}-e^{1256}-e^{1247}
$$
is a static solution to \eqref{A=0}. More generally it can be noticed that every left-invariant ${\rm G}_2$-structure $M$ of the form 
$$
\varphi=c_1e^{123}+c_2e^{145}+c_3e^{167}+c_4e^{246}-c_5e^{257}-c_6e^{347}-c_7e^{356}
$$
gives a static solution to \eqref{A=0} and that every left-invariant co-closed ${\rm G}_2$-structure on $M$ is static. 
}
\end{ex}
\begin{ex}\label{EE2}{\em 
Let $\mathfrak g$ be the nilpotent Lie algebra admitting a coframe $\{e^1,\dots,e^7\}$ satisfying 
$$
d e^i=0,\quad  i=1,3,5,7\,,\quad de^2=-e^{13}\,,\quad de^4=e^{15}\,,\quad de^6=e^{17}
$$
and let $G$ be the simply-connected Lie group having $\g$ as Lie algebra. Then $G$ has a co-compact lattice $\Gamma$ and we set  $M=G/\Gamma$. 
A direct computation gives that the standard ${\rm G}_2$-structure 
$$
\varphi=e^{123}+e^{145}+e^{167}+e^{246}-e^{257}-e^{347}-e^{356}
$$
is coclosed and static with respect to the modified Laplacian coflow with $A=0$.  However, in contrast with the previous example, 
we have that on $M$ there are left-invariant coclosed ${\rm G}_2$-structures which are not static. For instance if we consider 
$$
\varphi=c_1^2e^{123}+c_2^2e^{145}+c_3^2e^{167}+c_4^2e^{246}-c_5^2e^{257}-c_6^2e^{347}-c_7^2e^{356}
$$
for $c_i$ constant, then $\varphi$ is coclosed and the corresponding $\psi$ satisfies  
$$
\Delta_{\psi}\psi-2d({\rm tr}\, T_{\psi}*_\psi \psi)=
\frac{2(c_{2}c_{4}c_{7}+c_{2}c_{5}c_{6}-c_{3}c_{4}c_{6})}{c_1^2c_{3}c_{5}c_{7}}\,e^{1357}\,.
$$
}
\end{ex}

\begin{rem}{\em 
Note that the ${\rm G}_2$-structures in Example \ref{EE1} and Example \ref{EE2} cannot be torsion-free since a (compact) nilmanifold $M$ cannot admit a 
left-invariant ${\rm G}_2$-structure unless it is a torus. 
}
\end{rem}

\section{From theorem \ref{teoremone} to the stability of the Balanced flow}\label{3}
We recall that given a K\"ahler manifold $(M,\omega_0)$ the Calabi-flow starting from  $\omega_0$ is the geometric flow of K\"ahler forms governed by the equation 
\begin{equation}\label{CF}
\partial_t\omega_t=i\partial \bar\partial s_{\omega_t}\,,\quad \omega_{|t=0}=\omega_0
\end{equation}
where $s_{\omega_t}$ is the Riemannian scalar curvature of $\omega_t$.  Many properties of the flow were proved in \cite{chen}. Here we recall the theorem by Chen and He about the stability of the Calabi-flow. 

\begin{theorem}[Chen-He]\label{Chen-He}
Let $(M,\bar \omega)$ be a  compact K\"alher manifold with constant scalar curvature. Then there exists $\delta>0$ such that if $\omega_0$ is a K\"ahler metric satisfying 
$$
\|\omega_0-\bar\omega\|_{C^{\infty}}<\delta\,,
$$  
then the Calabi-flow starting from $\omega_0$ is immortal and converges in $C^{\infty}$ topology to a constant scalar curvature K\"ahler metric in $[\bar \omega].$  
\end{theorem}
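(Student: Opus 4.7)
The plan is to cast Chen--He's theorem as an application of Theorem \ref{teoremone}, mirroring the derivations of the Lotay--Wei and modified coflow stability results given in the preceding sections. Take as Hodge system the one associated to the Aeppli Laplacian mentioned in the introduction: set $E_-=\Lambda^0_\R$ (real functions), $E=\Lambda^{1,1}_\R$ (real $(1,1)$-forms), $E_+=\Lambda^3_\R$, $D=i\partial\bar\partial$, $D_+=d$, and let $\mathcal{E}\subset E$ be the open subbundle of pointwise positive $(1,1)$-forms. Then $\Phi=\ker D_+\cap C^\infty(M,\mathcal{E})$ is precisely the space of smooth K\"ahler metrics on $M$, and the choice
$$
Q(\omega)=i\partial\bar\partial s_\omega
$$
identifies the flow \eqref{problem} with the Calabi flow \eqref{CF}. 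Condition 1 of the framework is immediate. For conditions 2 and 3 I would use the classical variation formula
$$
Q_{*|\omega}(i\partial\bar\partial\phi)=i\partial\bar\partial\Bigl(-\mathcal{D}_\omega^{*}\mathcal{D}_\omega\phi+\tfrac12 g_\omega(\nabla s_\omega,\nabla\phi)\Bigr),
$$
where $\mathcal{D}_\omega$ is the Lichnerowicz operator of $\omega$. Setting $l_\omega\phi:=-\mathcal{D}_\omega^{*}\mathcal{D}_\omega\phi+\tfrac12 g_\omega(\nabla s_\omega,\nabla\phi)$, and $L_\omega$ a strongly elliptic extension to $(1,1)$-forms of the operator $D\circ l_\omega\circ D^{-1}$ defined on $\mathrm{Im}\,D$, supplies the required $2m=4$ order elliptic operators.

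Now suppose $\bar\omega$ is a cscK metric. Then $Q(\bar\omega)=0$, so condition 1 of Theorem \ref{teoremone} holds. The simplified linearization $l_{\bar\omega}=-\mathcal{D}_{\bar\omega}^{*}\mathcal{D}_{\bar\omega}$ is formally self-adjoint and non-positive with respect to the $L^2$ inner product induced by $\bar g$, and its kernel is exactly the (real parts of the) holomorphy potentials of $\bar\omega$. Under the simplifying assumption that $\mathrm{Aut}^0(M)$ is trivial, this kernel reduces to the constants; since $\ker D$ is also the constants, $l_{\bar\omega}$ descends to a symmetric negative definite operator on $C^\infty(M)/\R\cong\mathrm{Im}\,D$, so condition 2 of Theorem \ref{teoremone} is verified. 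The theorem then yields long-time existence, $C^\infty$-smallness, and exponential $C^\infty$-convergence of $\omega_t$ to some $\omega_\infty=\bar\omega+i\partial\bar\partial\phi_\infty\in[\bar\omega]$ with $s_{\omega_\infty}$ constant, which is precisely the desired conclusion.

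The main obstacle is the general case when $M$ admits non-trivial holomorphic vector fields, where $l_{\bar\omega}$ acquires extra kernel corresponding to infinitesimal automorphisms and condition 2 of Theorem \ref{teoremone} literally fails along these directions. I would handle this in two steps: first, modify $Q$ by a DeTurck-type gauge correction that kills the action of $\mathrm{Aut}^0(M)$ on $\Phi$, in analogy with the Bryant--Xu vector field $V(\varphi)$ used for the Laplacian-DeTurck flow, so that the linearization of the gauge-fixed flow becomes strictly negative definite on $\mathrm{Im}\,D$ and falls under Theorem \ref{teoremone}; second, recover convergence of the original Calabi flow itself via a \L{}ojasiewicz--Simon argument combined with the $\mathrm{Aut}^0(M)$-invariance of the Calabi functional, in the spirit of Chen--He's original proof. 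This is consistent with the weaker form of convergence in the statement, where the limit need not be $\bar\omega$ itself but only some cscK metric in the same K\"ahler class.
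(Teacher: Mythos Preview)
The paper does not prove this statement: Theorem~\ref{Chen-He} is quoted as a known result of Chen and He (reference \cite{chen}) to motivate the balanced flow discussed immediately afterwards. There is no ``paper's own proof'' to compare against.

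That said, your proposal to deduce the Chen--He theorem from Theorem~\ref{teoremone} is a reasonable exercise, so let me comment on it. In the case where $\mathrm{Aut}^0(M)$ is trivial the outline is essentially sound, though one step is left implicit: condition~2 of the framework requires a strongly elliptic operator $L_\omega$ defined on \emph{all} real $(1,1)$-forms, not only on $\mathrm{Im}\,D$, and you would need to exhibit such an extension and check its symbol. This is not automatic from the formula $Q_{*|\omega}(i\partial\bar\partial\phi)=i\partial\bar\partial\,l_\omega\phi$ alone.

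The genuine gap is in the general case. Your proposed fix conflates two different uses of ``gauge''. The Bryant--Xu vector field $V(\varphi)$ corrects for the failure of the Laplacian flow to be parabolic along diffeomorphism directions; the Calabi flow is already a fourth-order parabolic equation in the K\"ahler potential and needs no such correction. The extra kernel of $l_{\bar\omega}=-\mathcal{D}_{\bar\omega}^*\mathcal{D}_{\bar\omega}$ coming from holomorphy potentials is not a parabolicity defect but a genuine degeneracy of the Calabi energy along the $\mathrm{Aut}^0(M)$-orbit, and there is no analogue of the DeTurck trick that removes it while keeping the flow inside a fixed K\"ahler class. Chen and He's own argument handles this via uniform Sobolev bounds, geodesic stability, and an exponential decay obtained by working modulo the automorphism action; a \L{}ojasiewicz--Simon inequality (as in later work of Chen--Sun) is another route. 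Either way, this part lies outside the scope of Theorem~\ref{teoremone} as stated, which requires strict negative definiteness on $\mathrm{Im}\,D$, and the paper itself hints at this limitation in the remark following Theorem~\ref{stabbalanced}.
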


The Calabi-flow was generalized to the context of balanced geometry  by the authors in \cite{Wlucione} (see also \cite{Wlucione2} for a generalizations
 in a different direction). A Hermitian metric on a complex manifold is called {\em balanced} if its fundamental form is co-closed (instead  of closed as in the K\"ahler case).  Given a compact balanced manifold $(M,\omega_0)$ of complex dimension $n$ the {\em balanced flow} consists in evolving $\omega_0$ as  
\begin{equation}\label{bflow}
\begin{cases}
& \partial_t*_t\omega_t=i\partial\bar{\partial} *_{t}(\rho_t \wedge \omega_t)+(n-1)\Delta_{BC}^t*_{t}\omega_t\\
&d\omega_t^{n-1}=0\\
& \omega_{|t=0}=\omega_0\,,
\end{cases}
\end{equation}
where $*_t$, $\rho_t$  and $\Delta_{BC}^t$  are the Hodge star operator, the Chern-Ricci form and the modified Bott-Chern Laplacian of $\omega_t$, respectively (see \cite{Wlucione} for details). Also this flow fits in the class of flows of theorem \ref{teoremone} 
when we consider the following Hodge system:
$$
\begin{CD}
\Omega^{n-2,n-2} @>D=i\partial \bar\partial >>\Omega^{n-1,n-1} \\
@VV\Delta_D=\Delta_A V\\
\Omega^{n-2,n-2} @<D^*<<\Omega^{n-1,n-1}
\end{CD}
$$
where $\Delta_A$ is the modified Aeppli Laplacian
$$\Delta_{A}:=\ov{\partial}^*\partial^*\partial\ov{\partial}+\partial\ov{\partial}\ov{\partial}^*\partial^*+\partial\ov{\partial}^*\ov{\partial}\partial^*+\ov{\partial}\partial^*\partial\ov{\partial}^*+\partial\partial^*+\ov{\partial}\ov{\partial}^*\,.$$
\begin{theorem}\label{stabbalanced}
Let $(M,\bar \omega)$ be a compact Ricci-flat K\"ahler manifold of complex dimension $n$. Then there exists $\delta>0$ such that if $\omega_0$ is a balanced metric on $M$ satisfying $\|\omega_0-\bar \omega\|_{C^{\infty}}<\delta$, then flow \eqref{bflow} with initial datum $\omega_{|t=0}=\omega_0$ exists for all $t\in [0,\infty)$ and as $t\to \infty$ it 
converges in $C^{\infty}$ topology to a balanced form $\omega_{\infty}$ satisfying 
$$
i\partial\bar{\partial} *_{\omega_{\infty}}(\rho_{\omega_{\infty}} \wedge \omega_{\infty})+(n-1)\Delta_{BC}^{\omega_\infty}*_{\omega_{\infty}}\omega_{\infty}=0\,. 
$$
\end{theorem}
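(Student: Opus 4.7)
The plan is to verify the hypotheses of Theorem \ref{teoremone} for the Hodge system displayed just above the statement, with $\mathcal{E}$ the open subbundle of $\Lambda^{n-1,n-1}$ consisting of forms of type $*_\omega\omega$ for some positive $(1,1)$-form $\omega$, and with $Q\colon C^{\infty}(M,\mathcal{E})\to \Omega^{n-1,n-1}(M)$ given by the right-hand side of \eqref{bflow}. That the balanced flow fits this framework, including the existence of the strongly elliptic linearizations $L_\omega$ and $l_\omega$ required by hypotheses (2)--(3) of Section \ref{2}, was already established by the authors in \cite{Wlucione} while proving the short-time existence of \eqref{bflow}. Thus the only work is to check the two conditions on the background $\bar \omega$.

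Condition (1), that $Q(\bar\omega)=0$, is immediate: since $\bar\omega$ is Ricci-flat, $\rho_{\bar\omega}=0$ kills the first summand on the right-hand side of \eqref{bflow}; since $\bar\omega$ is K\"ahler, $*_{\bar\omega}\bar\omega=\tfrac{1}{(n-1)!}\bar\omega^{n-1}$ is parallel for the Levi-Civita connection, hence harmonic for every natural second-order self-adjoint operator built from it, so in particular $\Delta_{BC}^{\bar\omega}*_{\bar\omega}\bar\omega=0$.

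The main content is to verify Condition (2): that $L_{\bar\omega}$ restricted to ${\rm Im}\,D=i\partial\bar\partial\,\Omega^{n-2,n-2}$ is symmetric and negative definite with respect to the $L^2$ inner product induced by $\bar g$. I would linearize the two summands of $Q$ separately. The second summand, involving $\Delta_{BC}^\omega *_\omega\omega$, simplifies at the K\"ahler background because Bott-Chern, Aeppli, Dolbeault and Hodge Laplacians all agree (up to universal factors) on forms of pure bidegree; its linearization should reduce to a manifestly self-adjoint, non-positive operator of the form $(i\partial\bar\partial)(i\partial\bar\partial)^*$ (plus a harmless zero-th order term that vanishes because $*_{\bar\omega}\bar\omega$ is parallel). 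The first summand, involving $i\partial\bar\partial *_\omega(\rho_\omega\wedge \omega)$, linearizes via the standard K\"ahler formula for the variation of $\rho_\omega$, namely $\dot\rho=-\tfrac{1}{2} i\partial\bar\partial\,{\rm tr}_{\bar\omega}\dot\omega$, and the Ricci-flatness of $\bar\omega$ ensures that all algebraic curvature corrections drop out, leaving another non-positive self-adjoint contribution. Strict negative definiteness on ${\rm Im}\,D$ is then obtained by combining both contributions with the injectivity of $D^*$ on ${\rm Im}\,D$ modulo harmonics, which is encoded in the Hodge identity \eqref{G} for the Aeppli system.

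Once Conditions (1)--(2) are verified, Theorem \ref{teoremone} yields directly the conclusion: a $\delta>0$ and a long-time solution $\omega_t$ staying $\epsilon$-close to $\bar\omega$ in $C^\infty$ and converging exponentially in $C^\infty$-topology to some $\omega_\infty\in U$ with $Q(\omega_\infty)=0$. The hard step is expected to be the explicit computation of $L_{\bar\omega}$ on ${\rm Im}\,D$: the two summands of $Q$ are coupled through the simultaneous $\omega$-dependence of $*_\omega$, $\rho_\omega$ and $\Delta_{BC}^\omega$, and one has to track carefully all the cross-terms and curvature corrections and check that, at a Ricci-flat K\"ahler metric, they combine into a non-positive self-adjoint operator on exact $i\partial\bar\partial$-forms. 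This is analogous to (and at least as delicate as) the derivation of the Lichnerowicz operator that underlies Chen-He's stability theorem for the Calabi flow (Theorem \ref{Chen-He}).
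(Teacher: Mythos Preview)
Your approach is essentially the paper's: verify conditions (1) and (2) of Theorem~\ref{teoremone} at $\bar\varphi=*_{\bar\omega}\bar\omega$, citing \cite{Wlucione} for the well-posedness framework. The one point where the paper differs from your expectation is the computation of $L_{\bar\varphi}$: the two summands of $Q$ are \emph{not} separately self-adjoint and non-positive in the way you suggest. Writing the variation $\chi=h_1\bar\varphi+*_{\bar\omega}h_0$ with $h_0$ primitive, one finds $\dot\omega=\tfrac{h_1}{n-1}\bar\omega-h_0$, so the map $\chi\mapsto\dot\omega$ is not the Hodge star and the linearization of $(n-1)\Delta_{BC}^{\varphi}\varphi$ alone is not $(i\partial\bar\partial)(i\partial\bar\partial)^*\chi$. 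Instead, the Chern-Ricci contribution (via $\dot\rho=-in\partial\bar\partial h_1$, from \cite[Lemma~5.1]{Wlucione}) exactly corrects the coefficient on the $h_1$-component, and the \emph{sum} collapses to $L_{\bar\varphi}(\chi)=-(n-1)\Delta_{BC}^{\bar\omega}\chi$ on the nose. Symmetry and negative-definiteness on $i\partial\bar\partial\,\Omega^{n-2,n-2}$ are then immediate, with no Lichnerowicz-type curvature analysis required; the calculation is considerably shorter and cleaner than you anticipate.
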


\begin{proof}
A Hermitian form $\omega$ on a complex manifold $M$ is determined by an $(n-1,n-1)$-form $\varphi$ which is positive in the sense that 
\begin{equation}\label{balancedform}
\varphi(Z_1,\dots, Z_{n-1},\bar Z_1,\dots, \bar Z_{n-1})>0
\end{equation}
for every $\{Z_1,\dots,Z_{n-1}\}$ linearly independent vector fields of type $(1,0)$ on $M$ (here $n$ is the complex dimension of $M$). Indeed, once such a form $\varphi$ is given, there exists a unique Hermitian form $\omega$ such that $*_{\omega}\omega=\varphi$.  We denote by $\mathcal E\subseteq \Lambda^{n-1,n-1}_\R$ the bundle whose sections are real $(n\!-\!1,n\!-\!1)$--forms satisfying \eqref{balancedform}. Flow \eqref{stabbalanced} can be alternatively written in terms of $\varphi$ as 
\begin{equation}\label{theflow}
\begin{cases}
& \partial_t\varphi_t=i\partial\bar{\partial} *_t(\rho_t \wedge *_t\varphi_t)+(n-1)\Delta_{BC}^t\varphi_t\\
&d\varphi_t=0\\
& \varphi_{|t=0}=\varphi_0\,,
\end{cases}
\end{equation}
and we denote by $Q\colon C^{\infty}(M,\mathcal E)\to \Omega^{n-1,n-1}_\R$ the operator 
$$
Q(\varphi)=i\partial\bar{\partial} *_{\varphi}(\rho_{\varphi} \wedge *_\varphi\varphi)+(n-1)\Delta_{BC}^\f \varphi\,. 
$$
In order to apply theorem \ref{teoremone} we show that for a Ricci-flat K\"ahler form $\bar\omega$ on $M$ the corresponding $\bar\f=*_{\bar\omega} \bar \omega$ is such that
\begin{enumerate}
\item[1.] $Q(\bar \f)=0$;
\vspace{0.2cm}
\item[2.] the restriction to $D\,\Omega^{n-2,n-2}$ of $L_{\bar \varphi}$ is symmetric and negative definite with respect to the $L^2$ inner product induced by $\bar \omega$\,.
\end{enumerate}
Item 1 is trivial and item 2 can be deduced from \cite[Section 5]{Wlucione}, but we prove it for the sake of completeness.

Let $\{\omega_t\}_{t\in (-\epsilon,\epsilon)}$, be a smooth curve of  balanced forms which is $\bar \omega$ at $t=0$ and such that the corresponding $(n\!-\!1,n\!-\!1)$-forms $\varphi_t$ are in the Bott-Chern cohomology class of $\bar\varphi$, and let 
$$
\chi=\partial_{t|t=0}*_{t}\omega_t\,.
$$
Then we can write 
$$
\chi=h_1\bar\f+*_{\bar \omega} h_0
$$
for a smooth function $h_1$ and a $(1,1)$-form $h_0$ such that $h_0\wedge\omega^{n-1}=0$. In this way 
$$
\partial_{t|t=0}\omega_t=\frac{h_1}{n-1}\bar \omega-h_0\,
$$
see \cite[Lemma 2.5]{Wlucione}. Since $\bar \rho=0$ we have  
$$
\partial_{t|t=0} i\partial\bar{\partial} *_{t}(\rho_t \wedge \omega_t)= i\partial\bar{\partial} *_{\bar \omega}(\dot \rho \wedge \bar \omega)
$$
where we have set 
$
\dot \rho= \partial_{t|t=0} \rho_{t}\,. 
$
In view of \cite[lemma 5.1]{Wlucione} $\dot\rho=-in\partial	\bar\partial h_1$ and so 
$$
\partial_{t|t=0} i\partial\bar{\partial} *_t(\rho_t \wedge \omega_t)=n\partial\bar{\partial} *_{\bar \omega}(\partial\bar\partial h_1\wedge \omega)\,. 
$$
On the other hand it is clear that  for a curve of Bott-Chern-cohomologous $(n\!-\!1,n\!-\!1)$-forms $\varphi_t$ starting at $\bar \varphi$ we have 
$$
\partial_{t|t=0}\Delta^{\varphi_t}_{BC}\varphi_t=- \,\partial \bar\partial *_{\bar \omega} \partial \bar\partial\, \dot \omega=- \,\partial \bar\partial *_{\bar \omega} \partial \bar\partial\,\left(\frac{h_1}{n-1}\bar \omega-h_0\right)\,.
$$ 
And then we obtain 
\begin{eqnarray*}
L_{\bar\varphi}(\psi) & = &\partial_{t|t=0}i\partial\bar{\partial} *_t(\rho_t \wedge \omega_t)+(n-1)\partial_{t|t=0}\Delta^{\varphi_t}_{BC}\varphi_t=(n-1)\partial \bar\partial *_{\bar \omega} \partial \bar\partial\,\left(h_1\bar \omega-h_0\right)
\end{eqnarray*}
Moreover since $\bar\omega$ is K\"ahler we have 
$$
\begin{aligned}
\Delta^{\bar\omega}_{BC}(\psi) & =  - \dd*_{\bar\omega}\dd*_{\bar\omega} \psi  =  - \dd*_{\bar\omega}\dd*_{\bar\omega}(h_1\bar\f+*_{\bar \omega} h_0) \\
& =  - \dd*_{\bar\omega}\dd(h_1\bar\omega) + \dd*_{\bar\omega}\dd h_0 
 =  - \dd*_{\bar\omega}\dd( h_1 \w \bar\omega -h_0)
\end{aligned}
$$
and so 
$$
L_{\bar\varphi}(\psi) =-(n-1)\Delta^{\bar\omega}_{BC}(\psi)
$$
which implies the statement. 
\end{proof}
 
\begin{rem} 

\medskip 
{\em It is quite natural wondering if theorem \ref{stabbalanced} can be improved by showing that the limit balanced metric 
is actually Calabi-Yau, or by proving the stability around more general static solutions of the flow, such as constant scalar curvature K\"ahler metrics or even balanced metrics $\omega$ satisfying
\begin{equation}\label{GB}
i\partial\bar{\partial} *(\rho \wedge  \omega)+(n-1)\Delta_{BC}*\omega=0\,. 
\end{equation}
These improvements cannot be easily deduced from our theorem \ref{teoremone} and will be the subject of some future studies. 
On the other hand, theorem \ref{teoremone} suggests to consider balanced metrics satisfying \eqref{GB} as natural generalizations of extremal K\"ahler metrics to 
the  context of balanced geometry (for a generalization in another direction see \cite{fei}) and the problem of the existence and uniqueness of such metrics in a fixed Bott-Chern cohomology class arises. More general it could be interesting to compare the geometry of extremal K\"ahler metrics to the geometry of balanced metrics satisfying \eqref{GB}.
}  
\end{rem}

\section{Proof of theorem \ref{teoremone}} 
In this last section we prove theorem \ref{teoremone}. 
The scheme of the proof resembles the one of the main theorem of \cite{witt1} and of \cite{Wlucione2}.

\smallskip
First we need to recall some basic facts about the category of tame Fr\'echet spaces and tame maps (see \cite{HamiltonNash} for the relevant details).
A {\em tame Fr\'echet space} is a vector space $\mathcal V$ endowed with  a topology given by an increasing countable family of seminorms $\{|\cdot|_n\}$. Thus a sequence $\{x_n\}\subseteq \mathcal V$ will be  {\em convergent} if it converges with respect to each seminorm. A continuous map $F\colon (\mathcal V,|\cdot|_n)\to (\mathcal W,|\cdot|'_n)$ between two tame Fr\'echet spaces is called {\em tame} if for every $x\in \mathcal V$  there are a neighborhood $U_x$ of $x$, a natural number $r$ and positive numbers $b,C_n$ such that
$$
|F(y)|'_n\leq C_n(1+|y|_{n+r})
$$
for every $y\in U_x$ and $n>b$. A differentiable map between tame Fr\'echet spaces is called {\em smooth tame} if all its derivatives are tame maps.
The main relevant result is the celebrated {\em Nash-Moser  theorem.}

\begin{theorem}[\bf{Nash-Moser}]\label{Nash-Moser}
Let $\mathcal{V}$, $\mathcal{W}$ be tame Fr\'echet spaces and let  $\mathcal{U}$
be an open subset of  $\mathcal{V}$. 
Let $F\colon \mathcal U\to \mathcal W$ be a smooth map. If the differential  of $F$, $F_{*|x}\colon \mathcal V\to \mathcal W$, is an isomorphism  for every $x\in\mathcal{U}$ and the map $(x,y) \mapsto F_{*|x}^{-1}y$ is smooth tame, then $F$ is locally invertible with smooth tame local inverses.
\end{theorem}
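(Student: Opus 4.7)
The plan is to establish the Nash--Moser theorem by a modified Newton iteration involving smoothing operators, following the strategy in Hamilton's foundational paper \cite{HamiltonNash}. The fundamental obstacle, compared with the classical Banach-space inverse function theorem, is that the hypothesis provides only a tame estimate on the inverse differential: each application of $F_{*|x}^{-1}$ potentially loses $r$ derivatives, so no contraction mapping argument can close in any single seminorm $|\cdot|_n$. One is forced to work with the whole scale of seminorms simultaneously.

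I would begin by equipping $\mathcal V$ and $\mathcal W$ with families of smoothing operators $\{S_\theta\}_{\theta\geq 1}$ satisfying the standard estimates
\begin{equation*}
|S_\theta v|_m \leq C\,\theta^{m-n}|v|_n \quad (m\geq n), \qquad |(I-S_\theta)v|_n \leq C\,\theta^{n-m}|v|_m \quad (m\geq n).
\end{equation*}
These exist on any tame Fr\'echet space essentially by definition, since tameness provides an equivalent embedding into a graded space of exponentially weighted sequences in a Banach space, and one transports smoothing there from the standard constructions. Their role is to convert low-regularity control into high-regularity control at a polynomially priced cost in $\theta$, precisely the tool needed to compensate for the derivative loss.

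Fixing $x_0\in\mathcal U$, for $y$ close to $F(x_0)$ I would then define the Newton scheme
\begin{equation*}
x_{k+1} = x_k - F_{*|S_{\theta_k}x_k}^{-1}\bigl(F(x_k)-y\bigr),\qquad \theta_k = \theta_0^{(3/2)^k},
\end{equation*}
and prove by induction a pair of estimates of the form $|F(x_k)-y|_n\leq A\,\theta_k^{-\alpha}$ and $|x_{k+1}-x_k|_{n+r}\leq B\,\theta_k^{-\beta}$ for suitable exponents $\alpha,\beta>0$ and sufficiently high indices $n$. The key algebraic ingredients are interpolation inequalities between seminorms, the quadratic error decomposition
\begin{equation*}
F(x_{k+1})-y = \bigl[F(x_{k+1})-F(x_k)-F_{*|x_k}(x_{k+1}-x_k)\bigr] + \bigl[F_{*|x_k}-F_{*|S_{\theta_k}x_k}\bigr](x_{k+1}-x_k),
\end{equation*}
and the smoothing estimates above. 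By choosing $\theta_0$ large and $|y-F(x_0)|_{n+r}$ small, the quadratic gain from Newton's method dominates both the derivative loss coming from $F_{*|\cdot}^{-1}$ and the smoothing residuals.

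The main obstacle is the careful bookkeeping required to close the induction simultaneously in every seminorm: one must arrange that $x_k$ stays inside $\mathcal U$ so the invertibility hypothesis remains applicable, that $|F(x_k)-y|_n$ decays summably fast, and that the auxiliary errors from $I-S_{\theta_k}$ and from the second-order Taylor remainder are absorbed at every step. Once convergence of $\{x_k\}$ is established in all seminorms, the limit $x_\infty$ satisfies $F(x_\infty)=y$, yielding local surjectivity. Local injectivity follows from a parallel estimate applied to the difference of two putative preimages. Finally, smooth tameness of $F^{-1}$ is deduced by differentiating $F\circ F^{-1}=\mathrm{id}$ to obtain $(F^{-1})_{*|y}=F_{*|F^{-1}(y)}^{-1}$, so tameness of $F^{-1}$ itself propagates to its derivative via the hypothesis, and higher derivatives are handled inductively using Fa\`a di Bruno's formula together with closure of tame maps under composition.
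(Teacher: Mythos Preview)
The paper does not prove the Nash--Moser theorem: it is stated as Theorem~\ref{Nash-Moser} with a citation to Hamilton \cite{HamiltonNash} and then used as a black box in the proof of Proposition~\ref{weakstability}. So there is no ``paper's own proof'' to compare against; the authors simply invoke the classical result.

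Your sketch is a reasonable outline of the standard Hamilton proof via smoothing operators and a modified Newton iteration, and nothing in it is obviously wrong at this level of detail. But for the purposes of this paper you would not be expected to reprove Nash--Moser; you would just cite it and move on, as the authors do.
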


Let $\pi\colon E\to M$ be a vector bundle over a compact oriented Riemannian manifold $(M,g)$ with a metric $h$ along its fibres. Once a connection $\nabla$ on $E$ preserving $h$ is fixed, the space $C^{\infty}(M,E)$ of global smooth sections of $E$ has a natural structure of tame Fr\'echet space given by the Sobolev norms $\|.\|_{H^n}$ induced by $h$, $\nabla$ and the volume form of $g$.
Fix now a closed interval $[a,b]$ and consider the space of time-dependent partial differential operators $P\colon C^{\infty}(M \times [a,b],E)\to C^{\infty}(M \times [a,b],E)$
having degree at most $r$.  This space is tame Fr\'echet with respect to the family of seminorms
$$
|[P]|_n=\sum_{jr\leq n} [\partial_t^{j}P]_{n-jr}
$$
where $[P]_n$ is the supremum  of the norm of $P$ and its space covariant derivatives 
up to degree $n$.

\medskip 
 Now we can focus on the setting described in the introduction considering a Hodge system $(E_-,E,D,\Delta_D)$ on $M$, $\mathcal E, D_+,Q$ as in section \ref{2} and studying flow \eqref{problem} under the assumptions 1, 2, 3.  Let us fix a connection $\nabla$ on $E$ and define the spaces  
$$
\mathcal{F}[a,b]=DC^{\infty}(M\times [a,b],E_{-})\,,\quad \mathcal{G}[a,b]=\mathcal{F}[a,b]\times DC^{\infty}( M,E_-)\,.
$$
Both the above spaces have a structure of tame Fr\'echet spaces given by the gradings 
$$
\|\beta\|_{\mathcal{F}^n[a,b]}=\sum_{2rj\leq n}\int_{a}^{b}\|\partial_t^{j}\beta_t \|_{H^{n-2rj}}\, dt
$$
and 
$$
\|(\beta,\sigma)\|_{\mathcal{G}^n[a,b]}=\|\beta \|_{\mathcal{F}^n[a,b]}+\|\sigma\|_{H^{n}}
$$
respectively. 
If we fix  $\bar\eta \in \Phi=\ker D_+\cap C^{\infty}(M,\mathcal E)$, then we can set
$$
\mathcal{U}=\{\beta\in \mathcal{F}[a,b]\,\,:\,\, \bar\eta+\beta_t \in U \mbox{ for every }t\in [a,b]\}\,,
$$
where, accordingly to section \ref{2}, $U=\{\bar\eta+D\gamma \,\,:\,\,\gamma\in C^\infty(M,E_-)\}\,\cap C^{\infty}(M,\mathcal E)$. Note that 
$\mathcal U$ is open in $\mathcal{F}[a,b]$.
 
 \medskip 
The following theorem is proved in \cite{positive} for second order operators and then extended in \cite{Wlucione} to operators of arbitrary degree.  

\begin{theorem}\label{CRELLE}
Let
$$
F\colon \mathcal{U}\to\mathcal G[a,b]\,,\quad F(\beta)=\left(\partial_t \beta-Q(\bar\eta+\beta),\beta_a \right)\,.
$$
Then 
\begin{enumerate}
\item[1.] $F$ is smooth tame;
\item[2.] $F_{*|\beta}$ is an isomorphism for every $\beta\in\mathcal{U}$;
\item[3.] the map $\mathcal{U}\times \mathcal{G}[a,b]\to \mathcal{F}$, $(\beta,\psi)\mapsto F_{*|\beta}^{-1}\psi$, 
is smooth tame.
\end{enumerate}
\end{theorem}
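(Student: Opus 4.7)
Following the scheme of \cite{Wlucione}, I would organize the proof around reducing the linearized Cauchy problem on $\mathrm{Im}\,D$ to a strongly parabolic Cauchy problem on $C^{\infty}(M,E_-)$, by combining assumption 3 with the Hodge identity $\psi=DGD^{*}\psi$, valid on $\mathrm{Im}\,D$.

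For claim 1, the map $F$ decomposes as the sum of $\beta\mapsto\partial_t\beta$, which is linear and evidently tame with respect to the chosen gradings, the trace $\beta\mapsto\beta_a$, which is tame by the standard trace theorem, and the nonlinear composition $\beta\mapsto Q(\bar\eta+\beta)$. Since $Q$ is a $2m$-th order differential operator whose coefficients depend smoothly on the jets of its argument, the usual tame composition and multiplication estimates for nonlinear differential operators of finite order (the loss of $2m$ spatial derivatives being absorbed by the grading) show that this nonlinear piece is smooth tame; differentiating in $\beta$ and iterating then yields tameness of every Fr\'echet derivative.

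For claim 2, fix $\beta\in\mathcal U$ and $(\alpha,\sigma_0)\in\mathcal G[a,b]$, and seek $\sigma\in\mathcal F[a,b]$ with
$$\partial_t\sigma_t-Q_{*|\bar\eta+\beta_t}\sigma_t=\alpha_t,\qquad \sigma_a=\sigma_0.$$
Since $\bar\eta\in\Phi$ and $\beta_t\in\mathrm{Im}\,D\subseteq\ker D_+$, one has $\bar\eta+\beta_t\in\Phi$ for every $t$, so assumption 3 applies. Using \eqref{G}, define canonical lifts $\tilde\alpha_t:=GD^{*}\alpha_t$ and $\tilde\sigma_0:=GD^{*}\sigma_0$, which satisfy $D\tilde\alpha_t=\alpha_t$ and $D\tilde\sigma_0=\sigma_0$. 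Because $l_{\bar\eta+\beta_t}$ is a smooth family of strongly elliptic operators on $C^{\infty}(M,E_-)$, classical linear parabolic theory produces a unique smooth $\theta_t$ solving
$$\partial_t\theta_t-l_{\bar\eta+\beta_t}\theta_t=\tilde\alpha_t,\qquad \theta_a=\tilde\sigma_0,$$
and setting $\sigma_t:=D\theta_t$ gives $\partial_t\sigma_t=D(l_{\bar\eta+\beta_t}\theta_t+\tilde\alpha_t)=Q_{*|\bar\eta+\beta_t}\sigma_t+\alpha_t$ together with $\sigma_a=\sigma_0$, proving surjectivity. For injectivity I would invoke assumption 2: on $\mathrm{Im}\,D$ one has $Q_{*|\bar\eta+\beta_t}=L_{\bar\eta+\beta_t}$, and $L_{\bar\eta+\beta_t}$ extends to a smooth family of strongly elliptic operators on all of $C^{\infty}(M,E)$, so the homogeneous Cauchy problem with zero initial datum has only the trivial solution.

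For claim 3, the joint smooth tame dependence of $F_{*|\beta}^{-1}$ on $(\beta,\psi)$ is inherited from the joint smooth tame dependence of $\theta$ on $(\tilde\alpha,\tilde\sigma_0,\beta)$. The latter follows from $L^{2}$-energy estimates for $\partial_t-l_{\bar\eta+\beta_t}$, upgraded to Sobolev estimates of the form $\|\theta\|_{\mathcal F^{n+2m}[a,b]}\leq C_n\bigl(\|(\tilde\alpha,\tilde\sigma_0)\|_{\mathcal G^{n}[a,b]}+(\text{lower-order tail in }\beta)\bigr)$ by commutator arguments and interpolation; these are precisely tame, with the fixed loss of $2m$ spatial derivatives absorbed by the grading. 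Composing with the tame linear maps $D$ and $GD^{*}$ then assembles $F_{*|\beta}^{-1}$. The main obstacle is controlling the dependence on the coefficient $\beta$, which enters nonlinearly through $l_{\bar\eta+\beta_t}$: one differentiates the linear parabolic equation in $\beta$ and applies the tame estimates recursively, handling the resulting nonlinear products of Sobolev norms by interpolation, exactly as in \cite{positive,Wlucione}.
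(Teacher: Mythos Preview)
The paper does not prove this theorem in-text; it simply records that the result was established in \cite{positive} for second-order operators and extended in \cite{Wlucione} to arbitrary order. Your sketch faithfully reproduces the strategy of those references---lifting the linearized Cauchy problem to $E_-$ via assumption~3 and the Hodge identity \eqref{G} for surjectivity, invoking the ambient strongly parabolic operator $\partial_t-L_\varphi$ from assumption~2 for uniqueness, and deriving tame estimates for the inverse from the parabolic energy estimates on $E_-$---so it is correct and aligned with the cited approach.
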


The starting point of the proof of theorem \ref{teoremone} is the following weak stability result that is a consequence of theorem \ref{CRELLE}.

\begin{prop}
\label{weakstability}
Let $\bar\varphi\in Q^{-1}(0)$. For every $T>0$  and $\varepsilon>0$,  there exists $\delta>0$ such that if $\varphi_0\in \Phi$ and satisfies  
$$
\varphi_0-\bar \varphi\in DC^{\infty}(M,E_{-})\,,\quad \|\varphi_0-\bar\varphi\|_{C^\infty}\leq \delta\,, 
$$
then there exists a smooth solution $\{\varphi_t\}_{t \in [0,T]}$ to \eqref{problem} such that 
$$
 \|\varphi-\bar \varphi\|_{\mathcal F^{n}[0,T]}\leq \varepsilon\,,\mbox{ for every }n\in \mathbb N\,.  
$$
\end{prop}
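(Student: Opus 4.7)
The plan is to reduce the statement to the Nash--Moser inverse function theorem (Theorem~\ref{Nash-Moser}) applied to the map $F$ furnished by Theorem~\ref{CRELLE}. Fix $T>0$ and take $\bar\eta=\bar\varphi$ and $[a,b]=[0,T]$, so that
$$
F\colon \mathcal{U}\to\mathcal{G}[0,T]\,,\qquad F(\beta)=\bigl(\partial_t\beta-Q(\bar\varphi+\beta),\,\beta_0\bigr)
$$
is smooth tame with everywhere invertible differential and smooth tame inverse of the differential. Since $Q(\bar\varphi)=0$ by hypothesis, $\beta\equiv 0$ lies in $\mathcal{U}$ and satisfies $F(0)=(0,0)$. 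Theorem~\ref{Nash-Moser} then produces a smooth tame local inverse $\Psi$ of $F$, defined on an open neighborhood $\mathcal{V}$ of $(0,0)$ in $\mathcal{G}[0,T]$.

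Given $\varphi_0\in\Phi$ with $\sigma:=\varphi_0-\bar\varphi\in DC^\infty(M,E_-)$, the pair $(0,\sigma)$ belongs to $\mathcal{G}[0,T]$, and its $\mathcal{G}^n$-seminorm reduces to $\|\sigma\|_{H^n}$. Because $M$ is compact, Sobolev embedding bounds every $\|\sigma\|_{H^n}$ by a constant (depending on $n$ and the background geometry) times $\|\sigma\|_{C^\infty}$, so for $\delta$ small enough the point $(0,\sigma)$ lies in $\mathcal{V}$. Setting
$$
\beta:=\Psi(0,\sigma)\,,\qquad \varphi_t:=\bar\varphi+\beta_t\,,
$$
and unravelling $F(\beta)=(0,\sigma)$ yields $\partial_t\beta_t=Q(\bar\varphi+\beta_t)$ together with $\beta_0=\sigma$; in other words $\{\varphi_t\}_{t\in[0,T]}$ is a smooth solution of \eqref{problem} with initial datum $\varphi_0$. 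That $\bar\varphi+\beta_t\in C^\infty(M,\mathcal{E})$ for every $t\in[0,T]$ is automatic for $\beta$ close enough to $0$ in $\mathcal{F}[0,T]$, by openness of $\mathcal{E}\subset E$ and of $\mathcal{U}\subset\mathcal{F}[0,T]$.

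The quantitative bound $\|\beta\|_{\mathcal{F}^n[0,T]}\le\varepsilon$ is then obtained from tame continuity of $\Psi$ at $(0,0)$: for each $n$ there exist constants $C_n$ and $r_n\ge 0$ with
$$
\|\Psi(0,\sigma)\|_{\mathcal{F}^n[0,T]}\le C_n\|\sigma\|_{H^{n+r_n}}
$$
in a neighborhood of the origin. The hypothesis $\|\sigma\|_{C^\infty}\le\delta$ simultaneously controls every Sobolev norm of $\sigma$ by a geometry-dependent constant times $\delta$, so each individual $\mathcal{F}^n$-seminorm can be made smaller than $\varepsilon$ by shrinking $\delta$. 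I expect the main technical nuisance to be precisely the interplay between the inevitable loss of derivatives $r_n$ in the tame estimate and the requirement that the bound hold for every $n\in\mathbb{N}$; however the use of the $C^\infty$-topology on the initial perturbation is exactly what is needed to absorb this loss, since it enforces uniform smallness across every Sobolev norm of $\sigma$.
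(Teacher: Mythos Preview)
Your proposal is correct and follows essentially the same route as the paper: apply Theorem~\ref{CRELLE} with $\bar\eta=\bar\varphi$ so that $F(0)=(0,0)$, invoke Nash--Moser to obtain a smooth tame local inverse near the origin, feed in $(0,\varphi_0-\bar\varphi)$, and read off the solution together with the $\mathcal F^n$-smallness from continuity of the inverse. The paper's proof is slightly terser (it appeals directly to continuity of $F^{-1}$ rather than writing an explicit tame estimate), but the argument is the same.
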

\begin{proof}
We use theorem \ref{CRELLE} with $\bar\eta=\bar\varphi$. 
Since $F(0)=(0,0)$, theorem \ref{CRELLE} together with Nash-Moser theorem \ref{Nash-Moser} implies that there exist an open neighborhood $\mathcal U'$ of $0$ in $\mathcal U$ and an open 
neighborhood $\mathcal V'$ of $(0,0)$ in $\mathcal G$ such that $F\colon \mathcal U'\to \mathcal V'$ is invertible with smooth tame inverse. By choosing $\delta$ small enough we may assume that $(0,\varphi_0-\bar\varphi) \in \mathcal V'$. So we can take $\beta_t \in \mathcal U'$ such that $F(\beta_t)=(0,\varphi_0-\bar\varphi)$. Hence $\varphi_t=\bar\varphi+\beta_t$ satisfies 
$$
\partial_t\varphi_t=Q(\varphi_t)\,,\quad \varphi_{|t=0}=\varphi_0\,. 
$$
Since $F^{-1}$ is continuous, if we fix $\varepsilon>0$ and we choose $\delta$ small enough, we have $\|\varphi-\bar\varphi\|_{\mathcal F^n[0,T]}\leq \varepsilon$ for every $n\in \mathbb N$ and the claim follows. 
\end{proof}


The next step is the following
\begin{lemma}[Interior Estimate]
\label{intest_gen}
For every $n,T>0$ and $\epsilon \in (0,T)$, there exists $\delta , C> 0$ and and $l=l(n) \in \mathbb{N}$, with $C$ depending on $T,\epsilon$ and an upper bound on $\delta$ such that if $\{\varphi_t\}_{t\in[0,T ]}$ is a smooth curve in $U$ with 
$$
\| \varphi-\bar\varphi\|_{\mathcal F^{l}[0,T]} \leq \delta
$$
and $\sigma\in \mathcal F[0,T]$ satisfies 
$$
\partial_t\sigma=L_{\varphi}\sigma\,,
$$
then 
\begin{equation}\label{induction}
\|\sigma\|_{\mathcal{F}^{2nr}[t_0+\epsilon ,T]}\leq C\|\sigma\|_{\mathcal{F}^0[t_0 ,T]}\,,
\end{equation}
for every $t_0 \in [0,T-\epsilon]$.
\end{lemma}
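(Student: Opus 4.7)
The plan is to prove the lemma by induction on $n$, with the base case $n = 0$ being immediate since $[t_0 + \epsilon, T] \subset [t_0, T]$ and so $\|\sigma\|_{\mathcal F^0[t_0 + \epsilon, T]} \leq \|\sigma\|_{\mathcal F^0[t_0, T]}$. For the inductive step, the key observation is that $\partial_t \sigma = L_\varphi \sigma$ is a linear parabolic equation with smooth coefficients, and $L_\varphi$ is a smooth family of strongly elliptic operators of order $2r$ (matching the parabolic scaling encoded in the $\mathcal F^n$-norms). Classical interior parabolic regularity then yields the desired smoothing.

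First I would establish a uniform G{\aa}rding inequality. Since strong ellipticity of $L_\varphi$ is an open condition depending only on its principal symbol, and the coefficients of $L_\varphi$ depend smoothly on $\varphi$, there exists $l_0 \in \mathbb N$ and, for $\delta$ small, uniform constants $c > 0$ and $C_0 > 0$ such that the smallness hypothesis $\|\varphi - \bar\varphi\|_{\mathcal F^{l_0}[0,T]} \leq \delta$ implies
\[
-\langle L_{\varphi_t} \tau, \tau \rangle_{L^2} \,\geq\, c\, \|\tau\|_{H^r}^2 - C_0\, \|\tau\|_{L^2}^2
\]
for every $\tau \in C^\infty(M,E)$ and every $t \in [0,T]$. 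This is the basic coercivity used throughout.

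Next I would run a standard time-weighted energy bootstrap. Pick a smooth cutoff $\chi(t)$ supported in $(t_0 + \epsilon/2, T]$ with $\chi \equiv 1$ on $[t_0 + \epsilon, T]$. Differentiating $\chi(t)^2 \|\sigma_t\|_{L^2}^2$ in time, substituting the equation, applying G{\aa}rding, and absorbing the $\chi' \chi$ term via Young's inequality yields
\[
\int_{t_0 + \epsilon}^T \|\sigma_t\|_{H^r}^2\, dt \,\leq\, C_1 \|\sigma\|_{\mathcal F^0[t_0, T]}^2.
\]
Iterating this argument with cutoffs that shrink toward $[t_0 + \epsilon, T]$ and differentiating the equation in both time and space repeatedly, I would obtain control of $\|\sigma_t\|_{H^{kr}}$ for every $k$ in $L^2$-in-time. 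Finally, I convert these into full $\mathcal F^{2nr}$-estimates by using the equation to write $\partial_t^j \sigma$ as a sum of terms of the form (polynomial in derivatives of $\varphi$) times (spatial derivatives of $\sigma$ of total order at most $2rj$), which produces the mixed-norm bound after integration in $t$.

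The main technical obstacle is the bookkeeping of derivatives of the coefficients. Commutators $[\partial^\alpha, L_\varphi]$ with $|\alpha| \leq 2nr$ and iterated time derivatives $\partial_t^j L_\varphi$ generate terms containing high derivatives of $\varphi$, which must be controlled by $\|\varphi - \bar\varphi\|_{\mathcal F^l}$ using the induction hypothesis and the smallness of $\delta$, without degrading the G{\aa}rding constant $c$. Counting the highest derivative of $\varphi$ that appears in $\partial_t^n L_\varphi$ together with commutators of $L_\varphi$ against spatial derivatives of order $2nr$ determines the required $l = l(n)$, which grows linearly with $n$. The constants are allowed to depend on $T$, $\epsilon$ and an upper bound on $\delta$; choosing $\delta$ small enough at each induction step ensures that the coercivity constants and all absorption constants remain uniform, closing the induction.
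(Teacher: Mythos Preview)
Your approach is correct in spirit and would work, but it differs from the paper's proof in the main technical tool. Both arguments proceed by induction on $n$ and use a time cutoff $\chi$ supported in $(t_0+\epsilon/2,T]$ with $\chi\equiv 1$ on $[t_0+\epsilon,T]$, so that $\tilde\sigma=\chi\sigma$ has zero initial data and solves $\partial_t\tilde\sigma-L_\varphi\tilde\sigma=\dot\chi\,\sigma$. The divergence is in what comes next: you propose to rederive interior parabolic regularity from scratch via G{\aa}rding coercivity and a bootstrap of weighted $H^{kr}$ energy estimates, tracking commutators with space and time derivatives. The paper instead invokes a ready-made tame parabolic estimate (lemma~4.6 of \cite{Wlucione}), which for any $\sigma$ bounds $\|\sigma\|_{\mathcal F^{N+2r}}$ in terms of $\|\partial_t\sigma-L_\varphi\sigma\|_{\mathcal F^{N}}$, $\|\sigma_0\|_{H^{N+r}}$ and $|[L_\varphi]|_N$. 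Applied to $\tilde\sigma$ this yields $\|\sigma\|_{\mathcal F^{2r(N+1)}[t_0+\epsilon,T]}\leq C\,\|\sigma\|_{\mathcal F^{2rN}[t_0+\epsilon/2,T]}$ in one line, and the induction closes immediately. Your route is more self-contained but carries the heavy commutator bookkeeping you flag; the paper's route is much shorter because all of that analysis is already packaged inside the well-posedness machinery of \cite{Wlucione}, and the only ``smallness'' used is the trivial bound $|[L_\varphi]|_N\leq 1+|[L_{\bar\varphi}]|_N$ for $\|\varphi-\bar\varphi\|_{\mathcal F^l}$ small.
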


\begin{proof}
We prove the statement by induction on $n$. For $n=0$ the claim is trivial and we assume the statement true up to $N$. 
From \cite[lemma 4.6]{Wlucione} there exist $\delta, C \in \R^+$, with
$C$ depending on $T$ and an upper bound on $\delta$, such that for $t\in [0,T)$ and $\sigma\in \mathcal F[0,T]$ we have
\begin{eqnarray*}
\|\sigma\|_{ \mathcal{F}^{N+2r}[t,T]} & \leq & C\left( \|\partial_t\sigma-L_{\varphi}(\sigma)\|_{ \mathcal{F}^{N}[0,T]}
+\|\sigma_{0}\|_{H^{N+r}}\right)\\
 & & +C|[L_{\varphi}]|_N \left( \|\partial_t\sigma-L_{\varphi}(\sigma)\|_{ \mathcal{F}^{0}[0,T]}+\|\sigma_{0}\|_{H^{N}}\right) \\
 & \leq & (1+C)|[L_{\varphi}]|_N \left( \|\partial_t\sigma-L_{\varphi}(\sigma)\|_{ \mathcal{F}^{N}[0,T]}
+\|\sigma_{0}\|_{H^{N+r}}\right)\,.
\end{eqnarray*}
If $\| \varphi-\bar\varphi\|_{\mathcal F^{l}[0,T]}< \delta$ for $l$ big enough then $|[L_{\varphi}]|_N \leq 1+|[L_{\bar\varphi}]|_N$, so we have 
\begin{equation}\label{est1}
\|\sigma\|_{ \mathcal{F}^{N+2r}[t,T]}\leq (1+C)(1+ |[L_{\bar\varphi}]|_N)\left( \|\partial_t\sigma-L_{\varphi}(\sigma)\|_{ \mathcal{F}^{N}[0,T]}+\|\sigma_{0}\|_{H^{N+r}}\right)
\end{equation}
for every $\sigma \in \mathcal F[0,T]$. 
Now take $\sigma\in \mathcal F[0,T]$ solution of the linear equation $\partial_t\sigma=L_{\varphi}\sigma$  and fix $\epsilon \in (0,T)$. Choose a smooth function $\chi\colon \R \to [0,1]$ such that 
$$
\chi(t)=0 \quad \mbox{ for } t \leq t_0+\epsilon/2\,,\quad \chi(t)=1\quad \mbox{ for } t \geq  t_0+\epsilon\,.
$$
Set $\tilde \sigma=\chi \sigma$. Then $\partial_t\tilde \sigma =\dot \chi \sigma+\chi \partial_t \sigma$ and 
$$
\partial_t \tilde \sigma-L_\varphi(\tilde \sigma)=\dot \chi \sigma\,.
$$
Hence  using \eqref{est1} we find $C'>0$ depending only on $\epsilon$ such that
\begin{multline*}	
$$
\|\sigma\|_{\mathcal F^{2r(N+1)}[t_0+\epsilon,T]}\leq\|\tilde \sigma\|_{\mathcal F^{2r(N+1)}[t_0+\epsilon/2,T]}\leq  (1+C)(1+ |[L_{\bar\varphi}]|_N) \|\dot \chi \sigma\|_{\mathcal F^{2rN}[t_0+\epsilon/2,T]} \\
\leq C'(1+C)(1+ |[L_{\bar\varphi}]|_N)\,\| \sigma\|_{\mathcal F^{2rN}[t_0+\epsilon/2,T]}
$$
\end{multline*}
and the induction assumption implies the statement. 
\end{proof}



Now we need a general lemma for families of symmetric operators on Hilbert spaces.
Here we will say that, given a Hilbert space $\mathcal{H}_1$ continuously embedded in a Hilbert space $\mathcal{H}_2$, an operator $L: \mathcal{H}_1 \to \mathcal{H}_2$ is symmetric if $\langle L z_1,z_2\rangle_{\mathcal{H}_2} = \langle z_1,L z_2 \rangle_{\mathcal{H}_2}$ for every $z_1, z_2 \in \mathcal{H}_1\,$.
Analogously we will say that $L$ is negative semidefinite if  $\langle L z,z\rangle_{\mathcal{H}_2}\leq 0$ for every $z \in {\mathcal{H}_1}$.

\begin{lemma}
\label{Hilbert}
Let $(X,\bar x)$ be a pointed metric space and let $\mathcal{H}_1$ and $\mathcal{H}_2$ be two Hilbert spaces with $\mathcal{H}_1$ continuously embedded in $\mathcal{H}_2$.
Let $\{L_x\}_{x \in X}$ be a continuous family of bounded symmetric operators $L_x :  \mathcal{H}_1 \to \mathcal{H}_2$. 
Assume that $L_{\bar x}$ is negative semidefinite and that there exists $C>0$ such that 
\begin{equation}
\label{boh}
\|z_0\|_{\mathcal{H}_1} \leq C \| z_0\|_{\mathcal{H}_2}\,, \quad \mbox{for every $z_0 \in \ker L_{\bar x}\,,$}
\end{equation}
and
\begin{equation}
\label{inversa}
\|z_1\|_{\mathcal{H}_1} \leq C \|L_{\bar x} z_1\|_{\mathcal{H}_2}\,, \quad \mbox{for every $z_1 \in (\ker L_{\bar x})^{\perp}\,.$}
\end{equation}
Then for every $\epsilon>0$ there exists $\delta>0$ such that if $x \in X$ satisfies $d(x,\bar x)<\delta$,
\begin{equation}
\langle L_{x} z ,z\rangle_{\mathcal{H}_2} \leq (1-\epsilon) \langle L_{\bar x} z ,z\rangle_{\mathcal{H}_2}+\epsilon \|z\|^2_{\mathcal{H}_2}
\quad \mbox{for every $z\in \mathcal{H}_1$.}
\end{equation}
\end{lemma}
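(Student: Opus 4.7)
The strategy is to work with the $\mathcal{H}_2$-orthogonal decomposition relative to $\ker L_{\bar x}$ and to control the perturbation term by term, with the help of a Poincar\'e-type inequality coming from the coercivity assumption. For $z\in\mathcal{H}_1$ I would write $z=z_0+z_1$ with $z_0\in\ker L_{\bar x}$ and $z_1\in(\ker L_{\bar x})^\perp$ (orthogonality in $\mathcal{H}_2$). The hypothesis $\|z_0\|_{\mathcal{H}_1}\le C\|z_0\|_{\mathcal{H}_2}$ combined with the continuous embedding $\mathcal{H}_1\hookrightarrow\mathcal{H}_2$ (with constant $C_{\mathrm{emb}}$) ensures that the $\mathcal{H}_2$-orthogonal projection onto $\ker L_{\bar x}$ leaves $\mathcal{H}_1$ invariant, so that $z_0,z_1\in\mathcal{H}_1$. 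Symmetry of $L_{\bar x}$ then gives $\langle L_{\bar x}z,z\rangle_{\mathcal{H}_2}=\langle L_{\bar x}z_1,z_1\rangle_{\mathcal{H}_2}\le 0$.

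The key intermediate step is to observe that the coercivity $\|z_1\|_{\mathcal{H}_1}\le C\|L_{\bar x}z_1\|_{\mathcal{H}_2}$ forces $L_{\bar x}$ to have closed range in $\mathcal{H}_2$, so $B:=(-L_{\bar x})^{-1}$ is a bounded symmetric positive operator on $\mathrm{Ran}(L_{\bar x})\subset\mathcal{H}_2$ with $\|B\|_{\mathrm{op}}\le CC_{\mathrm{emb}}$. The standard inequality $B^2\le\|B\|_{\mathrm{op}}B$ for bounded positive self-adjoint operators, applied to $w=-L_{\bar x}z_1$, then yields the Poincar\'e-type bound
\[
\|z_1\|^2_{\mathcal{H}_2}\;\le\;K\,\bigl|\langle L_{\bar x}z_1,z_1\rangle_{\mathcal{H}_2}\bigr|,\qquad K:=\|B\|_{\mathrm{op}}.
\]
Setting $M_x:=L_x-L_{\bar x}$ and $\eta(x):=\|M_x\|_{\mathcal{H}_1\to\mathcal{H}_2}$, continuity of $x\mapsto L_x$ gives $\eta(x)\to 0$ as $x\to\bar x$. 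Expanding via symmetry of $M_x$,
\[
\langle L_xz,z\rangle_{\mathcal{H}_2}-\langle L_{\bar x}z,z\rangle_{\mathcal{H}_2}=\langle M_xz_0,z_0\rangle+2\langle M_xz_0,z_1\rangle+\langle M_xz_1,z_1\rangle.
\]
The first two pieces are controlled directly: using $\|z_0\|_{\mathcal{H}_1}\le C\|z_0\|_{\mathcal{H}_2}\le C\|z\|_{\mathcal{H}_2}$ together with Cauchy--Schwarz and AM--GM, they are bounded by a constant multiple of $\eta(x)\|z\|^2_{\mathcal{H}_2}$, and hence fit into the slack $\epsilon\|z\|^2_{\mathcal{H}_2}$ once $\delta$ is small.

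The main obstacle is the diagonal term $\langle M_xz_1,z_1\rangle$: the direct estimate
\[
|\langle M_xz_1,z_1\rangle|\le\eta(x)\|z_1\|_{\mathcal{H}_1}\|z_1\|_{\mathcal{H}_2}\le\eta(x)C\|L_{\bar x}z_1\|_{\mathcal{H}_2}\|z_1\|_{\mathcal{H}_2}
\]
features the factor $\|L_{\bar x}z_1\|_{\mathcal{H}_2}$, which is \emph{not} controlled by $|\langle L_{\bar x}z_1,z_1\rangle|$ through elementary means. To overcome this I would pass to the Friedrichs self-adjoint extension $\widetilde A\ge 0$ of $-L_{\bar x}$ on $\mathcal{H}_2$ (whose existence is guaranteed by the semi-boundedness and closability of the non-negative quadratic form $u\mapsto-\langle L_{\bar x}u,u\rangle$ on the dense subspace $\mathcal{H}_1$), whose spectrum on $(\ker\widetilde A)^\perp$ is bounded below by $\lambda_0=1/(CC_{\mathrm{emb}})>0$. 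For a cut-off $R\gg\lambda_0$ one splits $z_1=z_1'+z_1''$ via the spectral projections onto $[\lambda_0,R]$ and $(R,\infty)$: on $z_1''$ the lower bound $|\langle L_{\bar x}z_1'',z_1''\rangle|\ge R\|z_1''\|^2_{\mathcal{H}_2}$ alone dominates $\langle M_xz_1'',z_1''\rangle$ for $R$ large, while on $z_1'$ the extension acts with $\mathcal{H}_2$-norm at most $R$, giving the improved bound $\|L_{\bar x}z_1'\|^2_{\mathcal{H}_2}\le R|\langle L_{\bar x}z_1',z_1'\rangle|$ which absorbs the perturbation via AM--GM once $\eta(x)$ is small depending on $R$. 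Choosing $R$ large first and then $\delta$ sufficiently small yields the desired inequality.
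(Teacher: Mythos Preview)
Your decomposition $z=z_0+z_1$ and the treatment of the terms involving $z_0$ are fine, and the Poincar\'e-type inequality $\|z_1\|_{\mathcal H_2}^2\le K\,|\langle L_{\bar x}z_1,z_1\rangle|$ is correct. The gap is in the high-frequency step. On the spectral piece $z_1''$ you still only have
\[
|\langle M_x z_1'',z_1''\rangle|\;\le\;\eta(x)\,\|z_1''\|_{\mathcal H_1}\|z_1''\|_{\mathcal H_2}\;\le\;\eta(x)\,C\,\|\widetilde A z_1''\|_{\mathcal H_2}\,\|z_1''\|_{\mathcal H_2},
\]
and the product $\|\widetilde A z_1''\|\,\|z_1''\|$ is \emph{not} controlled by $\langle\widetilde A z_1'',z_1''\rangle$, no matter how large $R$ is: if the spectral measure of $z_1''$ puts mass $1-s$ at $R$ and mass $s$ at $R/s$, then $\|\widetilde A z_1''\|\,\|z_1''\|\big/\langle\widetilde A z_1'',z_1''\rangle\to\infty$ as $s\to 0$. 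So the assertion that the lower bound $\langle\widetilde A z_1'',z_1''\rangle\ge R\|z_1''\|^2$ ``alone dominates $\langle M_x z_1'',z_1''\rangle$ for $R$ large'' is unjustified. (There is also a secondary issue: the spectral projections of the Friedrichs extension act on $\mathcal H_2$ and are not known to preserve $\mathcal H_1$, so $M_x z_1'$ and $M_x z_1''$ need not even be defined.)

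The paper avoids both difficulties by working with operator norms rather than quadratic forms. Setting $T:=-\epsilon L_{\bar x}\ge 0$ and $V_x:=L_{\bar x}-L_x$, the hypotheses \eqref{boh} and \eqref{inversa} give, for $d(x,\bar x)$ small, a Kato relative bound
\[
\|V_x z\|_{\mathcal H_2}\;\le\;a\,\|z\|_{\mathcal H_2}+b\,\|Tz\|_{\mathcal H_2}\qquad\text{with }a=\tfrac{\epsilon}{2},\ b=\tfrac12.
\]
A Kato--Rellich type theorem (the paper cites \cite[Theorem~9.1]{weidmann}) then converts this \emph{operator-norm} relative bound into the \emph{form} lower bound $\langle(T+V_x)z,z\rangle\ge-\epsilon\|z\|^2$, which is exactly the desired inequality. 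The point is that the Kato--Rellich argument goes through the resolvent $(T+\mu)^{-1}$, and it is precisely this regularization that bridges the gap between $\|Tz\|\,\|z\|$ and $\langle Tz,z\rangle$ which your direct form-based estimate cannot cross.
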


\begin{proof} 
Fix $\epsilon >0$. Let $T:=-\epsilon L_{\bar x}$ and $V_x:=L_{\bar x}-L_x$, for every $x \in X$. 
Now $T$ is symmetric and positive semidefinite. 
Let us write $z=z_0+z_1$ according to the decomposition $\mathcal{H}_1=\ker L_{\bar x} \oplus (\ker L_{\bar x})^\perp$. 
Thus for $b>0$ arbitrarily small, using also \eqref{inversa} we can find $\delta>0$ such that if $d(x,\bar x)\leq \delta $, we have
$$
\|V_x z_1\|_{\mathcal{H}_2}\leq b \epsilon C^{-1} \|z_1\|_{\mathcal{H}_1} \leq b \|T z\|_{\mathcal{H}_2}
$$
for every $z\in \mathcal{H}_1$. Consequently using \eqref{boh}, up to shrinking $\delta$ we have
$$
\|V_x z\|_{\mathcal{H}_2}\leq  \|L_x z_0 \|_{\mathcal{H}_2} + \|V_x z_1\|_{\mathcal{H}_2} \leq a\|z\|_{\mathcal{H}_2} + b \|T z \|_{\mathcal{H}_2}
$$
with $a>0$ arbitrarily small. Taking $a=\frac{\epsilon}{2}$ and $b=\frac12$ and using \cite[Theorem 9.1]{weidmann} we have 
that $\langle (T +V_x) z, z \rangle_{\mathcal{H}_2} \geq -\epsilon \|z\|_{\mathcal{H}_2}^{2}$ for every $z \in {\mathcal{H}_1}$ and the claim follows.
\end{proof} 

Now we apply the previous lemma to the family of operators $L_\varphi$ in the following situation: $\mathcal{H}_1 = H^{2r}(M,E)$, {\em i.e.} the space of sections of $E$ whose local components have square integrable derivatives up to order $2r$, $\mathcal{H}_2 = L^{2}(M,E)$, $(X,\bar x) = (\Phi,\bar \varphi)$ where on $\Phi$ we consider the distance induced by $H^{\bar n}(M,E)$, where $\bar n=\frac{\dim M}{2}+2r+1$. The choice of $\bar n$ ensures via the Sobolev embedding theorem that  $\{L_{\varphi}\}_{\varphi\in \Phi}$ is a {\em continuous} family of bounded operators.  Since inequality \eqref{inversa} comes from Fredholm alternative and inequality \eqref{boh} holds due to elliptic regularity of $L_{\bar \varphi}$ we get the following corollary.


\begin{cor}
\label{corollario_a}
For every $a>0$ there exists $\delta>0$ such that if $\varphi\in C^\infty(M,\mathcal E)$ satisfies $\|\varphi-\bar\varphi\|_{H^{\bar n}}<\delta$, then 
\begin{equation}
\langle L_{\varphi}(z),z\rangle_{L^2} \leq (1-a) \langle L_{\bar\varphi}(z),z\rangle_{L^2}+a \|z\|^2_{L^2}
\end{equation}
for every $z\in H^{2r}(M,E)$. 
\end{cor}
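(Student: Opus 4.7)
The plan is to obtain the corollary as a direct application of Lemma~\ref{Hilbert} with the identifications spelled out in the paragraph preceding the statement, namely $\mathcal{H}_1 = H^{2r}(M,E)$, $\mathcal{H}_2 = L^2(M,E)$, $X = C^\infty(M,\mathcal E)$ equipped with the distance induced by the $H^{\bar n}$-norm, and $\bar x = \bar\varphi$. Choosing the parameter $\epsilon$ of that lemma equal to $a$ yields precisely the desired inequality, so the task reduces to verifying the four hypotheses of the lemma.

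First I would establish the continuity of the family $\{L_\varphi\}$ viewed as a map into $\mathcal{B}(\mathcal{H}_1,\mathcal{H}_2)$. Since $L_\varphi$ is, by assumption~2 of the general setup of section~\ref{2}, a differential operator of order $2r$ whose coefficients depend smoothly on the $2r$-jet of $\varphi$, the Sobolev embedding $H^{\bar n}(M,E)\hookrightarrow C^{2r}(M,E)$---which is valid because $\bar n > \dim M/2 + 2r$---implies that when $\|\varphi-\bar\varphi\|_{H^{\bar n}}$ is small, the coefficients of $L_\varphi-L_{\bar\varphi}$ are small in $C^0$. This gives the required bound on $\|L_\varphi-L_{\bar\varphi}\|_{\mathcal{B}(\mathcal{H}_1,\mathcal{H}_2)}$.

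Next, the symmetry and negative semidefiniteness of $L_{\bar\varphi}$ on the relevant subspace is hypothesis~2 of Theorem~\ref{teoremone}. The two remaining hypotheses \eqref{boh} and \eqref{inversa} of Lemma~\ref{Hilbert} are standard consequences of the strong ellipticity of $L_{\bar\varphi}$: inequality \eqref{boh} holds because $\ker L_{\bar\varphi}$ is a finite-dimensional subspace consisting of smooth sections (by Fredholm theory plus elliptic regularity), so all Sobolev norms on it are equivalent; and inequality \eqref{inversa} follows by combining the elliptic a priori estimate $\|z\|_{H^{2r}} \leq C(\|L_{\bar\varphi} z\|_{L^2}+\|z\|_{L^2})$ with the Poincar\'e-type bound $\|z\|_{L^2} \leq C\|L_{\bar\varphi} z\|_{L^2}$ valid on $(\ker L_{\bar\varphi})^\perp$ by Fredholm alternative. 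Invoking Lemma~\ref{Hilbert} then closes the argument.

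The only mildly delicate point is the verification of continuity of the family $L_\varphi$: the precise Sobolev exponent $\bar n = \dim M/2 + 2r + 1$ is chosen exactly so that the coefficients of $L_\varphi$ lie in $C^0$, which is the minimum regularity needed to bound its operator norm uniformly. All the other steps are routine applications of standard elliptic theory on compact manifolds and involve no additional analytic difficulty.
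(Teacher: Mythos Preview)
Your proposal is correct and mirrors the paper's own argument almost exactly: the paper also derives the corollary by applying Lemma~\ref{Hilbert} with $\mathcal H_1=H^{2r}(M,E)$, $\mathcal H_2=L^2(M,E)$, $X$ equipped with the $H^{\bar n}$-distance, invoking the Sobolev embedding for continuity of $\varphi\mapsto L_\varphi$, elliptic regularity for \eqref{boh}, and the Fredholm alternative for \eqref{inversa}. Your write-up is simply a more detailed unpacking of the same verification sketched in the paragraph preceding the corollary.
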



Next we deduce the following trace-type theorem in $C^\infty(M \times [0,T],E_-)$:

\begin{prop}\label{mante} 
For every $n\in \mathbb N$ and $\ell \in \R_+$ there exists positive constants $C$ and $m \in {\mathbb N}$ such that
$$
\|\beta_t\|_{H^n}\leq C\|\beta\|_{\mathcal F^{m}I}
$$ 
for every $\beta\in C^\infty(M \times I,E_-)$, and $t \in I$, where $I\subseteq \R$ is a closed interval of length $\ell$. 
\end{prop}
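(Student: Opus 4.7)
The plan is a one-dimensional Sobolev trace argument in the time variable, combined with monotonicity of Sobolev norms in the order. Fix $n\in\mathbb N$, let $\ell>0$, write $I=[a,b]$ with $b-a=\ell$, and choose the exponent
$$
m := n+2r,
$$
where $2r$ is the parabolic scaling factor appearing in the definition of $\|\cdot\|_{\mathcal F^{\bullet}I}$. This is the minimal value of $m$ for which both the $j=0$ and $j=1$ summands of the $\mathcal F^m$--norm will dominate the relevant quantities below.

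Fix $t\in I$. The key identity is the fundamental theorem of calculus in time: for every $s\in I$,
$$
\beta_t = \beta_s + \int_s^t \partial_\tau\beta_\tau\,d\tau.
$$
Taking $H^n$--norms on $M$ and using the triangle inequality, and then extending the time integral to all of $I$,
$$
\|\beta_t\|_{H^n} \leq \|\beta_s\|_{H^n} + \int_I \|\partial_\tau\beta_\tau\|_{H^n}\,d\tau.
$$
Now I would integrate this inequality in $s$ over $I$ and divide by $\ell$ to convert the pointwise term on the right into a time average:
$$
\|\beta_t\|_{H^n} \leq \frac{1}{\ell}\int_I \|\beta_s\|_{H^n}\,ds + \int_I \|\partial_\tau\beta_\tau\|_{H^n}\,d\tau.
$$

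Finally, I would recognise both summands as pieces of $\|\beta\|_{\mathcal F^m I}$ with $m=n+2r$. The $j=0$ contribution to $\|\beta\|_{\mathcal F^m I}$ equals $\int_I\|\beta_s\|_{H^m}\,ds$, which dominates $\int_I\|\beta_s\|_{H^n}\,ds$ because $m\geq n$ implies $\|\cdot\|_{H^n}\leq \|\cdot\|_{H^m}$. The $j=1$ contribution equals $\int_I\|\partial_\tau\beta_\tau\|_{H^{m-2r}}\,d\tau=\int_I\|\partial_\tau\beta_\tau\|_{H^n}\,d\tau$. Combining these with the previous display yields the claim with $C=\max(1/\ell,1)$ and $m=n+2r$. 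There is no serious obstacle here; the only thing to be careful about is to justify extending the time integral in the two-point formula to all of $I$ (which only makes the estimate weaker, hence harmless) and to pick $m$ large enough that losing $2r$ spatial derivatives to the time derivative still leaves at least $n$ derivatives available.
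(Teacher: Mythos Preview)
Your argument is correct and is in fact more elementary than the paper's. The paper proceeds by invoking \cite[Proposition 4.1]{MM} (Mantegazza--Martinazzi) to obtain the stronger pointwise bound
\[
\|\nabla^s\beta\|_{C^0(M\times I,E_-)} \leq C\|\beta\|_{\mathcal F^{m}I}
\]
for $m>\max\bigl\{\tfrac{s+\dim M+2r}{4r},\tfrac{s}{2r}\bigr\}$, and then deduces the $H^n$ estimate from that. This is an anisotropic Sobolev embedding and delivers a genuine sup bound in both space and time, at the cost of a larger (dimension--dependent) $m$ and an external reference. Your direct route---fundamental theorem of calculus in $t$, average over $s\in I$, and read off the $j=0$ and $j=1$ pieces of $\|\cdot\|_{\mathcal F^{n+2r}I}$---yields exactly what the proposition asks for with the explicit values $m=n+2r$ and $C=\max(1/\ell,1)$, entirely self-contained. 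Since the only downstream use of the proposition (Corollary~\ref{cormante} and then Lemma~\ref{Wexp}) requires nothing more than the $H^n$ trace estimate, your simpler argument suffices for the paper's purposes.
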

\begin{proof}
Arguing exactly as in \cite[proposition 4.1]{MM} for every $s \in \mathbb N$ we get the following inequality
$$
\|\nabla^s\beta\|_{C^0(M\times I,E_-)} \leq C\|\beta\|_{\mathcal F^{m}I}
$$
for $m>{\rm max}\{\frac{s+\dim M+2r}{4r},\frac{s}{2r}\}$ and $C$ independent of $\beta$. (Here $\N^s$ denotes spatial derivatives only).
\end{proof}

\begin{cor}\label{cormante}
For every $T>0$, $n\in \mathbb N$ and $\epsilon'>0$ there exist $C>0$ and $m \in {\mathbb N}$ such that every  $\beta \in C^\infty(M \times [0,T+\epsilon'],E_-)$ satisfies 
$$
\|\beta_t\|_{H^n}\leq C\|\beta\|_{\mathcal F^{m}[t,T+\epsilon']}
$$
for every $t\in [0,T]$.
\end{cor}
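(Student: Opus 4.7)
The plan is to reduce the corollary to Proposition \ref{mante} by localizing each evaluation time $t\in[0,T]$ inside a sub-interval of fixed length $\epsilon'$.

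Given $t\in[0,T]$, consider the sub-interval $I_t:=[t,t+\epsilon']\subseteq[0,T+\epsilon']$, which has length exactly $\epsilon'$ and contains $t$. Applying Proposition \ref{mante} with $I=I_t$ and $\ell=\epsilon'$ yields constants $C>0$ and $m\in\mathbb{N}$, depending only on $n$ and $\epsilon'$, such that
$$
\|\beta_t\|_{H^n}\leq C\,\|\beta\|_{\mathcal F^{m}I_t}=C\,\|\beta\|_{\mathcal F^{m}[t,t+\epsilon']}.
$$
Because the $\mathcal F^{m}$-norm is defined as a finite sum of integrals of non-negative integrands over the time interval, it is monotone with respect to enlarging that interval. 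Since $[t,t+\epsilon']\subseteq[t,T+\epsilon']$ we obtain
$$
\|\beta\|_{\mathcal F^{m}[t,t+\epsilon']}\leq\|\beta\|_{\mathcal F^{m}[t,T+\epsilon']},
$$
and the corollary follows by chaining the two inequalities.

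The only point to be verified is that the constants produced by Proposition \ref{mante} are uniform in $t\in[0,T]$. This is the case because Proposition \ref{mante} is stated abstractly for any closed interval of a given length $\ell$: its proof rests on Sobolev embedding inequalities on $M\times I$, whose constants are translation-invariant in the time variable and therefore depend only on $\ell$ (here $\ell=\epsilon'$) and on $n$. Hence $C$ and $m$ can be chosen once and for all, independently of $t$, which is exactly the form of the statement to be proved. There is no real analytic obstacle — the step is essentially bookkeeping.
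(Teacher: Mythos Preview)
Your proof is correct and follows exactly the same approach as the paper: apply Proposition~\ref{mante} on the interval $[t,t+\epsilon']$ of fixed length $\epsilon'$, then use monotonicity of the $\mathcal F^m$-norm under enlargement of the time interval. Your additional remark on the uniformity of the constants in $t$ is a welcome clarification, but otherwise the arguments coincide.
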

\begin{proof}
It is enough to apply proposition \ref{mante} with $I=[t,t+\epsilon']$. In this way 
$$
\|\beta_t\|_{H^n}\leq  C\|\beta\|_{\mathcal F^{m}[t,t+\epsilon']} \leq   C\|\beta\|_{\mathcal F^{m}[t,T+\epsilon']}
$$ 
with $C$ independent of $\beta$ and $t$ and the claim follows. 
\end{proof}


\begin{lemma}[exponential decay]\label{Wexp}
Let $\epsilon>0$ and $T>\epsilon $. There exists $\delta>0$ such that 
if $\varphi_0\in \Phi$ satisfies  
\begin{equation}\label{phi0}
\varphi_0-\bar \varphi\in DC^{\infty}(M,E_{-})\,,\quad \|\varphi_0-\bar\varphi\|_{C^\infty}\leq \delta\,, 
\end{equation}
then the solution $\varphi_t$ to \eqref{problem} is defined in  $M\times  [0,T]$ and satisfies
$$
\|Q(\varphi_t)\|_{H^n}\leq C \|Q(\varphi_0)\|_{L^2} \,{\rm e}^{-\lambda t} \,,\mbox{ for every }t\in [\epsilon,T]\,,
$$
where $\lambda$ is half the first positive eigenvalue of $-L_{\bar\varphi}$ and $C$ is a constant depending on $n$, $\epsilon$, $T$ and an upper bound on $\delta$. 
\end{lemma}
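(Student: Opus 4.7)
The plan is to differentiate the flow in time to obtain a linear parabolic equation for $\sigma_t := Q(\varphi_t)$, prove exponential $L^2$-decay of $\sigma_t$ via an energy estimate, and then upgrade to the $H^n$-norm using the parabolic interior regularity of Lemma \ref{intest_gen}.

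First I would invoke Proposition \ref{weakstability} on a slightly larger interval $[0,T+\epsilon']$ for some auxiliary $\epsilon'>0$; for $\delta$ small enough this produces a smooth solution $\{\varphi_t\}_{t\in[0,T+\epsilon']}$ with $\|\varphi-\bar\varphi\|_{\mathcal{F}^{m}[0,T+\epsilon']}$ arbitrarily small for any preassigned $m$. Since $Q(\Phi)\subset {\rm Im}\, D\subset\ker D_+$ and $D_+\varphi_0=0$, the subspace $\Phi$ is preserved by the flow, so $\sigma_t := \partial_t\varphi_t = Q(\varphi_t) \in {\rm Im}\, D$ for every $t$. Differentiating \eqref{problem} in time and invoking hypothesis 2 of the setup then gives the linear parabolic equation
$$
\partial_t \sigma_t = Q_{*|\varphi_t}(\sigma_t) = L_{\varphi_t}(\sigma_t)\,.
$$

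Next, I would prove the $L^2$-decay $\|\sigma_t\|_{L^2}\leq \|\sigma_0\|_{L^2}\,e^{-\lambda t}$ on $[0,T]$ by a standard energy argument. Proposition \ref{mante} converts the $\mathcal{F}^m$-smallness of $\varphi-\bar\varphi$ into a uniform pointwise $H^{\bar n}$-smallness of $\varphi_t-\bar\varphi$, so Corollary \ref{corollario_a} yields, for any preassigned $a>0$ (provided $\delta$ is correspondingly small),
$$
\langle L_{\varphi_t}\sigma_t,\sigma_t\rangle_{L^2} \leq (1-a)\langle L_{\bar\varphi}\sigma_t,\sigma_t\rangle_{L^2} + a\|\sigma_t\|_{L^2}^2\,.
$$
Since $\sigma_t\in DC^\infty(M,E_-)$ and, by hypothesis 2 of Theorem \ref{teoremone}, $-L_{\bar\varphi}$ is positive definite on this subspace with first eigenvalue $2\lambda$, one has $\langle L_{\bar\varphi}\sigma_t,\sigma_t\rangle_{L^2}\leq -2\lambda\|\sigma_t\|_{L^2}^2$. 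Choosing $a$ small enough gives $\frac{d}{dt}\|\sigma_t\|_{L^2}^2\leq -2\lambda\|\sigma_t\|_{L^2}^2$, and Gronwall closes this step.

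Finally, to reach the $H^n$-bound I would fix $t\in[\epsilon,T]$ and apply Lemma \ref{intest_gen} with initial time $t_0 = t-\epsilon/2$ and gap $\epsilon/2$ on the interval $[0,T+\epsilon']$, obtaining
$$
\|\sigma\|_{\mathcal{F}^{2nr}[t,\,T+\epsilon']} \leq C'\,\|\sigma\|_{\mathcal{F}^0[t-\epsilon/2,\,T+\epsilon']} \leq C''\|\sigma_0\|_{L^2}\,e^{-\lambda t}\,,
$$
where the second inequality uses the $L^2$-decay and the identity $\int_{t-\epsilon/2}^{\infty}e^{-\lambda s}\,ds=\lambda^{-1}e^{-\lambda(t-\epsilon/2)}$. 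Corollary \ref{cormante} then controls $\|\sigma_t\|_{H^n}$ by an $\mathcal{F}^m$-norm of $\sigma$ with $m=m(n)$ large enough, yielding the claimed bound. I expect the main technical nuisance to be the bookkeeping required to choose a single $\delta$ that simultaneously provides long-time existence on $[0,T+\epsilon']$ via Proposition \ref{weakstability} and uniform $H^{\bar n}$-closeness of $\varphi_t$ to $\bar\varphi$ along the flow, so that Corollary \ref{corollario_a} can be applied pointwise in time.
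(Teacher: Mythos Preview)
Your proposal is correct and follows essentially the same route as the paper: obtain existence on a slightly enlarged interval via Proposition~\ref{weakstability}, differentiate the flow to get $\partial_t Q(\varphi_t)=L_{\varphi_t}Q(\varphi_t)$, derive the $L^2$-exponential decay from Corollary~\ref{corollario_a} with $a=\lambda/(2\lambda+1)$ and Gronwall, and then upgrade to $H^n$ by combining Lemma~\ref{intest_gen} with Corollary~\ref{cormante}. The only cosmetic differences are that the paper works on $[0,T+2\epsilon']$ rather than $[0,T+\epsilon']$ and applies Corollary~\ref{cormante} before Lemma~\ref{intest_gen} rather than after, neither of which is material.
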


\begin{proof}
Fix a small time $\epsilon'>0$ arbitrary. Proposition \ref{weakstability} implies that there exists $\delta>0$ such that if $\varphi_0$ satisfies \eqref{phi0}, then problem \eqref{problem} has a solution $\f\in C^{\infty}(M\times [0,T+2\epsilon'],E)$  with $\|\varphi-\bar\varphi\|_{\mathcal F^l[0,T+2\epsilon']}$ bounded for every $l$. 
Now  
$$
\partial_t^2\varphi_t=\partial_tQ(\varphi_t)\,,
$$
implies
$$
\partial_tQ(\varphi_t)=L_{\varphi_t}Q(\varphi_t)
$$ 
and 
$$
\partial_t\|Q(\varphi_t)\|^2_{L^2}=2 \langle \partial_tQ(\varphi_t),Q(\varphi_t)\rangle_{L^2}=2 \langle L_{\varphi_t}Q(\varphi_t),Q(\varphi_t)\rangle_{L^2}
$$
for every $t\in [0 ,T+2\epsilon']$.  
In view of corollary \ref{cormante} and corollary \ref{corollario_a} we can choose the initial $\delta$ so small that 
for every $t\in [0,T+\epsilon']$ we have
$$
\langle L_{\varphi_t}Q(\varphi_t),Q(\varphi_t)\rangle_{L^2}\leq (1-a) \langle L_{\bar \varphi}Q(\varphi_t),Q(\varphi_t)\rangle_{L^2}+ a \|Q(\varphi_t)\|^2_{L^2}
$$
with $a=\frac{\lambda}{2\lambda+1}\,.$ 
Taking into account that $\langle L_{\bar \varphi} Q(\varphi_t),Q(\varphi_t) \rangle_{L^2} \leq -2\lambda \|Q(\varphi_t)\|^2_{L^2}\,,$ we have
$$
\langle L_{\varphi_t}Q(\varphi_t),Q(\varphi_t)\rangle_{L^2}\leq -{\lambda} \langle L_{\bar \varphi}Q(\varphi_t),Q(\varphi_t)\rangle_{L^2}\,.
$$
So 
$$
\partial_t\|Q(\varphi_t)\|^2_{L^2} \leq -2\lambda \|Q(\varphi_t)\|^2_{L^2}
$$
and by Gronwall's lemma we get 
$$
\|Q(\varphi_t)\|^2_{L^2}\leq {\rm e}^{- 2\lambda t} \|Q(\varphi_0)\|^2_{L^2}
$$
for every $t \in [0,T+\epsilon']$.
We have 
\begin{equation}
\label{exp2}
\|Q(\varphi)\|^2_{\mathcal F^{0}[t,T+\epsilon']}=\int_{t}^{T+\epsilon'}\|Q(\varphi_s)\|_{L^2}^2\,ds\leq \|Q(\varphi_0)\|^2_{L^2}\int_{t}^{T+\epsilon'}{\rm e}^{-2\lambda s} ds\leq \|Q(\varphi_0)\|^2_{L^2}
\frac{{\rm e}^{-2\lambda t}}{2\lambda}\,.
\end{equation}
By corollary \ref{cormante} we find $m$ such that for every $t\in [0,T]$
$$
\|Q(\varphi_t)\|_{H^{n}} \leq C \|Q(\varphi)\|_{\mathcal{F}^{m}[t,T+\epsilon']}\,.
$$
Now by lemma \ref{intest_gen} we can take $l$ big enough such that if $\|\varphi-\bar\varphi\|_{\mathcal{F}^l[0,T+2\epsilon']} \leq\delta$ we have
$$
\|Q(\varphi)\|_{\mathcal{F}^{m}[t,T+\epsilon']} \leq C\|Q(\varphi)\|_{\mathcal{F}^{0}[t-\epsilon,T+\epsilon']}\,,
$$
for every $t\in [\epsilon,T+\epsilon']\,.$
Finally putting these together with \eqref{exp2} we have
$$
\|Q(\varphi_t)\|_{H^{n}} \leq C  \|Q(\varphi_0)\|_{L^2}{\rm e}^{{-\lambda} t}\,
$$
for $t \in [\epsilon,T]$ as required.
\end{proof}

Now we are ready to prove the main theorem.

\begin{proof}[Proof of theorem $\ref{teoremone}$] 
Let $T>0$ and $\epsilon \in (0,\frac{T}{2})$ be fixed. Using theorem \ref{Wexp}, there exists $\delta'>0$ such that  if $\|\varphi_0-\bar\varphi\|_{C^\infty} \leq \delta'$, then the solution $\varphi_t$ to the geometric flow \eqref{problem} exists in $[0,T]$ and for every $n\in \mathbb N$
\begin{equation}
\|Q(\varphi_t)\|_{H^{n}}\leq C\|Q(\varphi_0)\|_{L^2}{\rm e}^{-\lambda t} \mbox{ for every }t\in [\epsilon,T]\,,
\end{equation}
for some $C>0$ depending on $n$, $\epsilon$, $T$ and an upper bound on $\delta'$.

Now we choose $\delta \leq \delta'$ such that  if $\|\varphi_0-\bar\varphi\|_{C^{\infty}}\leq \delta$ then
\begin{equation}\label{condizione}
C\|Q(\varphi_0)\|_{L^2}\frac{{\rm e}^{-\lambda \epsilon}}{\lambda}\sum_{j=0}^{\infty}{\rm e}^{-\lambda j(T-\epsilon)}+\|\varphi_{\epsilon}-\bar\varphi\|_{H^{n}}\leq \delta'\,. 
\end{equation}
We show that $\varphi$ can be extended to $M\times [0,\infty)$ and converges to an element of $U$ lying in the 0 level set of $Q$ as $t\to \infty$.  
We have 
$$
\begin{aligned}
\|\varphi_{t}-\bar\varphi\|_{H^{n}}=&\left\|\int_{\epsilon}^{t}Q(\varphi_\tau)\,d\tau+\varphi_\epsilon-\bar\varphi\right\|_{H^{n}}
\leq \int_{\epsilon}^{t}\|Q(\varphi_\tau)\|_{H^{n}}\,d\tau+\|\varphi_\epsilon-\bar\varphi\|_{H^{n}}\\
& \leq C\|Q(\varphi_0)\|_{L^2}\frac{{\rm e}^{-\lambda\epsilon}}{\lambda}+\|\varphi_\epsilon-\bar\varphi\|_{H^{n}}\,,\quad t \in [\epsilon,T]
\end{aligned}
$$
and condition \eqref{condizione} implies $\|\varphi_{T-\epsilon}-\bar\varphi\|_{H^{n}}\leq \delta'$ and therefore $\varphi$ can be extended  in $M\times [0,2T-\epsilon]$. Moreover,  
$$
\|Q(\varphi_t)\|_{H^{n}}\leq C \|Q(\varphi_0)\|_{L^2} \,{\rm e}^{-\lambda t} \,,\mbox{ for every }t\in [T,2T-\epsilon]\,.
$$ 
Now 
$$
\begin{aligned}
\|\varphi_{t}-\bar\varphi\|_{H^{n}}=&\left\|\int_{T}^{t}Q(\varphi_\tau)\,d\tau+\varphi_{T}-\bar\varphi \right\|_{H^{n}}
\leq \int_{T}^{t}\|Q(\varphi_\tau)\|_{H^{n}}\,d\tau+\|\varphi_{T}-\bar\varphi\|_{H^{n}}\\
&\leq C\|Q(\varphi_0)\|_{L^2}\frac{{\rm e}^{-\lambda T}}{\lambda}+\|\varphi_{T}-\bar\varphi\|_{H^{n}}\\
&\leq C\|Q(\varphi_0)\|_{L^2}\left(\frac{{\rm e}^{-\lambda T}}{\lambda}+\frac{{\rm e}^{-\lambda\epsilon}}{\lambda}\right)+\|\varphi_{\epsilon}-\bar\varphi\|_{H^{n}}\leq \delta'\,,\quad t\in [T,2T-\epsilon]
\end{aligned}
$$
therefore the flow can be extended in $M\times [0,3T-2\epsilon ]$ with exponential decay in $[2T-\epsilon,3T-2\epsilon]$.
Analogously 
$$
\begin{aligned}
\|\varphi_{t}-\bar\varphi\|_{H^{n}}=&\left\|\int_{2T-\epsilon}^{t}Q(\varphi_\tau)\,d\tau+\varphi_{2T-\epsilon}-\bar\varphi \right\|_{H^{n}}
\leq \int_{2T-\epsilon}^{t}\|Q(\varphi_\tau)\|_{H^{n}}\,d\tau+\|\varphi_{2T-\epsilon}-\bar\varphi\|_{H^{n}}\\
&\leq C\|Q(\varphi_0)\|_{L^2}\frac{{\rm e}^{-\lambda (2T-\epsilon)}}{\lambda}+\|\varphi_{2T-\epsilon}-\bar\varphi\|_{H^{n}}\\
&\leq C\|Q(\varphi_0)\|_{L^2}\left(\frac{{\rm e}^{-\lambda (2T-\epsilon)}}{\lambda}+\frac{{\rm e}^{-\lambda T}}{\lambda}+\frac{{\rm e}^{-\lambda\epsilon}}{\lambda}\right)+\|\varphi_{\epsilon}-\bar\varphi\|_{H^{n}}\leq \delta'\,,
\end{aligned}
$$
for $t\in [2T-\epsilon,3T-2\epsilon]$ and the flow can be extended in $M\times [0,4T-3\epsilon]$ with exponential decay in $[3T-2\epsilon,4T-3\epsilon]$. 
In this way for any $t\in [NT-(N-1)\epsilon,(N+1)T-N\epsilon]$ we have 
$$
\|\varphi_t-\bar \varphi\|_{H^{n}}\leq C\|Q(\varphi_0)\|_{L^2}\frac{{\rm e}^{-\lambda \epsilon}}{\lambda}\sum_{j=0}^{N}{\rm e}^{-\lambda j(T-\epsilon)}+\|\varphi_{\epsilon}-\bar\varphi\|_{H^{n}}\leq \delta'
$$
and the solution $\varphi$ is defined in $M\times [0,\infty)$. Now let $\varphi_\infty:=\varphi_0+\int_{0}^\infty Q(\varphi_s)ds\in  C^{\infty}(M,E)$;
since 
$$
\lim_{t\to \infty}\|\varphi_t-\varphi_{\infty}\|_{H^n}\leq \lim_{t\to \infty}C\|Q(\varphi_0)\|_{L^2}{\rm e}^{-\lambda t}=0\,, \mbox{ for $n$ large enough} 
$$
$\varphi_t$ converges to $\varphi_\infty$ in $C^\infty$-topology.  We clearly have $D_+\varphi_\infty=0$, since $D_+\varphi_t=0$ for every $t\in [0,\infty)$ and by 
construction 
$$
\|\varphi_t-\bar\varphi\|_{C^0}\leq C'\delta', \mbox{ for every }t\in [0,\infty)\,, 
$$
where $C'$ does not depend on $\delta$.  
So up to take $\delta'$ smaller we have $\varphi_\infty\in C^{\infty}(M,\mathcal E)$. Finally 
$$
Q(\varphi_\infty)=\lim_{t\to 0} Q(\varphi_t)=0
$$
and the claim follows. 
\end{proof}



\end{document}